\numberwithin{equation}{section} \theoremstyle{plain}
\newtheorem{theorem}{Theorem}[section]
\newtheorem{lemma}[theorem]{Lemma}
\newtheorem{corollary}[theorem]{Corollary}
\newtheorem{proposition}[theorem]{Proposition}
\theoremstyle{definition}
\newtheorem{definition}[theorem]{Definition}
\theoremstyle{remark}
\newtheorem{remark}[theorem]{Remark}
\def\Ho{\mbox{Ho}}
\def\O{{\cal O}}
\def\Z{\mathbb Z}
\def\mcF{\mathcal F}
\def\mcB{\mathcal B}
\def\Qco{\mathfrak{Qcoh}}
\def\Mod{\mbox{-Mod}}
\def\bfC+{\mathbf{C}_+}
\def\F{\tilde{\mathcal{F}}}
\def\C{{\mathcal{C}}}
\def\bfC{\mathbf{C}}
\def\O{\mathcal{O}}
\def\scrM{\mathscr{M}}
\def\scrA{\mathscr{A}}
\def\scrF{\mathscr{F}}
\def\scrG{\mathscr{G}}
\newcommand{\Ch}{\mathbf{C}}
\newcommand{\im}{\operatorname{Im}}
\newcommand{\Hom}{\operatorname{Hom}}
\newcommand{\Filt}{\mbox{Filt}}
\newcommand{\id}{\operatorname{id}}
\newcommand{\Proj}{\lambda\mbox{-Pproj}}
\newcommand{\Pproj}{\lambda\operatorname{-Pproj}}
\newcommand{\coker}{\mbox{Coker}}
\newcommand{\Inj}{\operatorname{Inj}}
\newcommand{\Pinj}{\operatorname{\otimes-Pinj}}
\newcommand{\dg}{{\mathit dg\,}}
\begin{document}

\title{Pure exact structures and the pure derived category of a scheme }

\author{Sergio Estrada}
\address{Departamento de Matem\'aticas \\
         Universidad de Murcia \\
         Campus del Espinardo\\
         30100 Espinardo, Murcia (Spain)}
\email[Sergio Estrada]{sestrada@um.es}
\urladdr{http://webs.um.es/~sestrada/}

\author{James Gillespie}
\address{Ramapo College of New Jersey \\
         School of Theoretical and Applied Science \\
         505 Ramapo Valley Road \\
         Mahwah, NJ 07430}
\email[Jim Gillespie]{jgillesp@ramapo.edu}
\urladdr{http://pages.ramapo.edu/~jgillesp/}
\author{S\.{I}nem Odaba\c{s}i}
\address{Departamento de Matem\'aticas \\
         Universidad de Murcia \\
         Campus del Espinardo\\
         30100 Espinardo, Murcia (Spain)}
\email[Sinem Odaba\c{s}{\i}]{sinem.odabasi1@um.es}

\date{\today}

\begin{abstract}
Let $\mathcal C$ be closed symmetric monoidal Grothendieck category. We define the pure derived category with respect to the monoidal structure via a relative injective model category structure on the category $\Ch(\C)$ of unbounded chain complexes in $\C$. We use $\lambda$-Purity techniques to get this. As application we define the stalkwise pure derived category of the category of quasi--coherent sheaves on a quasi-separated scheme. We also give a different approach by using the category of flat quasi--coherent sheaves. 

\end{abstract}

\maketitle
\section{Introduction}
In \cite{CB} Crawley-Boevey showed that locally finitely presented additive categories are the natural framework to define a good notion of Purity Theory. We recall that a locally finitely presented additive category $\mathcal A$ is an additive category with direct limits such that every object is a direct limit of finitely presented objects, and the class of finitely presented objects is skeletally small. Then a sequence $0\to M\to N\to T\to 0$ in $\mathcal A$ is pure if $0\to \Hom(G,M)\to \Hom(G,N)\to \Hom(G,T)\to 0$ is exact for each finitely presented object $G$ in $\mathcal A$. This defines a pure exact structure in $\mathcal A$ and yields the pure derived category $\mathcal{D}_{\rm pur}(\mathcal A)$ studied for example by Christensen and Hovey \cite{CH} and Krause \cite{Krause}. Recently in \cite{G} it has been shown that this pure derived category can be obtained as the homotopy category of two model category structures by using the pure projectives and the pure injectives. Locally finitely presented categories are quite abundant in Algebra as they include module categories, but also in Algebraic Geometry as most of the schemes that occur in practice (e.g. quasi-compact and quasi-separated schemes) are such that the category of quasi-coherent sheaves is always a locally finitely presented Grothendieck category. However, unless the scheme $X$ is affine, the {\it categorical} purity defined above for locally finitely presented categories does not coincide with the local purity on $\Qco(X)$. Let us see this in more detail. It makes sense to define purity of a short exact sequence in $\Qco(X)$ in terms of the purity of the corresponding short exact sequences on its stalks. And this seems to be a more reasonable way of defining purity in $\Qco(X)$ as it reflects the local nature of the definition. Let us call this definition {\it geometric} purity. Needless to say, geometric purity and categorical purity agree when $X$ is affine. But in case $X$ is not affine the two notions are different, and in general categorical purity implies geometric purity but the converse need not be true. Namely, assume that $X$ is quasi-compact and semi-separated and let us call an $F$ in $\Qco(X)$ a categorical flat sheaf provided that every short exact sequence $0\to N\to M\to F\to 0$ in $\Qco(X)$ is pure. If the two notions of purity were the same then it is not hard to see (\cite[Corollary 3.12]{EEO}) that the categorical flats in $\Qco(X)$ are precisely the usual flats in $\Qco(X)$ (that is, flatness in terms of the stalks). But this is not the case; for instance for projective spaces, it was shown in \cite[Corollary 4.6]{ES} that the only categorical flat sheaf is the zero sheaf in this case.

The goal of this paper is to define the pure derived category of a scheme, but using the geometric purity defined above in terms of the stalks. This leads us to work in the general setting of a closed symmetric monoidal Grothendieck category. Note that not every such category need be locally finitely presentable. However, every Grothendieck category is at least locally $\lambda$-presentable for some regular cardinal $\lambda$.   
There is a nice treatment of such categories in \cite{AR}. In particular Adameck and Rosicky showed that there is a nice extension of the usual purity theory to this setting, yielding a $\lambda$-purity theory. And it is an easy, but relevant for our purposes, observation that every $\lambda$-pure monomorphism is in fact a $\otimes$-pure monomorphism whenever we are in a closed symmetric monoidal Grothendieck category. This allows to take advantage of some results and techniques on $\lambda$-Purity Theory and apply them to $\otimes$-Purity. Thus we get the following result:

\medskip\par\noindent
{\bf Theorem A:} Let $\C$ be a closed symmetric monoidal Grothendieck category and $\Ch(\C)$ the associated category of chain complexes. Then there is a cofibrantly generated model category structure on $\Ch(\C)$ whose trivial objects are the $\otimes$-pure acyclic complexes; that is, complexes $X$ for which $X \otimes S$ is exact for all $S \in \C$. The model structure is exact (i.e. abelian) with respect to the exact category $\Ch(\C)_{\otimes}$ of chain complexes along with the proper class of degreewise $\otimes$-pure exact sequences. In fact, the model structure is injective in the sense that every complex is cofibrant and the trivially fibrant complexes are the injective objects of $\Ch(\C)_{\otimes}$, which are precisely the contractible complexes with $\otimes$-pure injective components. 
We call this model structure the $\otimes$-{\it pure injective model structure} on $\Ch(\C)$ and its corresponding homotopy category is the $\otimes$-{\it pure derived category}, denoted $\mathcal D_{\otimes{\mbox{-}}{\rm pur}}(\C)$.

\medskip\par\noindent 
This model structure is a $\otimes$-pure analog to the usual injective model structure on $\Ch(\C)$ whose fibrant objects are the DG-injective complexes. Indeed the fibrant objects,  which are described in Section~\ref{subsec-pure exact struc},  are defined exactly like the DG-injective complexes but with respect to the exact structure $\Ch(\C)_{\otimes}$ instead of the full abelian exact structure on $\Ch(\C)$. In order to construct the model structure we show that these fibrant objects are the right half of an injective cotorsion pair in the exact category $\Ch(\C)_{\otimes}$. It follows from Hovey's correspondence~\cite{hovey2}
that we get the described model structure on $\Ch(\C)$.

As a particular instance of the previous theorem, we get by applying Proposition~\ref{prop-pure}, the following application to $\Qco(X)$:

\medskip\par\noindent
{\bf Corollary:} Let $X$ be a quasi-separated scheme. Let $\mathcal E$ be the exact structure coming from the stalkwise-purity in $\Qco(X)$, and let us consider the category of unbounded complexes $\Ch(\Qco(X))$. Then with respect to the induced degreewise exact structure from $\mathcal E$, there is an exact and injective model category structure on $\Ch(\Qco(X))$. The corresponding homotopy category is the \emph{stalkwise-pure derived category} (or \emph{geometric pure derived category}), which we denote $\mathcal D_{{\rm stk}{\mbox{-}}{\rm pure}}(\Qco(X))$.

\medskip\par\noindent

Having two different notions of purity in a general closed symmetric monoidal Grothendieck category, the $\lambda$-purity and the $\otimes$-purity,  it is natural to ask what relationship there is between them. In Proposition~\ref{prop-adjunction} we show that there is a canonical functor from the $\lambda$-pure derived category $\mathcal{D}_{\lambda{\mbox{-}}{\rm pur}}(\C)$ to the $\otimes$-pure derived category $\mathcal{D}_{\otimes{\mbox{-}}{\rm pur}}(\C)$, which admits a right adjoint.

\medskip\par\noindent

The second part of this paper deals with an alternative approach to defining the pure derived category of a scheme. In \cite{MS} Murfet and Salarian define what they call the {\it pure derived category of flat sheaves} for a semi-separated noetherian scheme, as the Verdier quotient of the homotopy category of flat sheaves $\mathbf{K}(\mathrm{Flat}(X))$ with the localising subcategory $\mathbf{K}_{{\rm pac}}(\mathrm{Flat}(X))$ of the pure acyclic complexes of sheaves (that is, acyclic complexes of flat sheaves with flat cycles). Flat modules are intimately related with locally finitely presented categories due to Crawley-Boevey's Representation Theorem \cite{CB}. This establishes that every locally finitely presented additive category $\mathcal A$ is equivalent to the full subcategory $\mathrm{Flat}(A)$ of $\operatorname{Mod-}\!\! A$ of unitary flat right $A$-modules, where $A$ is the functor ring of $\mathcal A$ and the equivalence gives a 1-1 correspondence between pure  exact sequences in $\mathcal A$ and exact sequences in $\mathrm{Flat}(A)$. This equivalence lifts to the level of model structures as well and in particular to the derived categories, so we get the following:

\medskip\par\noindent
{\bf Theorem B:} Let $\mathcal A$ be a locally finitely presented additive category and let $\mathrm{Flat}(A)$ be its equivalent full subcategory of flat modules in $\operatorname{Mod-}\!\! A$. Then $\mathcal D_{\rm pur}(\mathcal A)$ is equivalent to $\mathcal D(\mathrm{Flat}(A))$, the homotopy category of the injective exact model category structure on $\Ch(\mathrm{Flat}(A))$.

\medskip\par\noindent
We do not known whether $\Qco(X)$ with the stalkwise-purity exact structure is equivalent to $\mathrm{Flat}(B)$ (for some ring or scheme) unless $X$ is affine. But we are able to extend Murfet and Salarian's definition to any scheme and observe that their pure derived category of flat sheaves is precisely the usual derived category of flat sheaves. Furthermore we get the derived category of flat sheaves as the homotopy category of a Quillen model category structure on $\Ch({\rm Flat}(X))$:

\medskip\par\noindent
{\bf Theorem C:} Let $X$ be any scheme, and ${\rm Flat}(X)$ the category of quasi-coherent flat sheaves. There is an injective exact model structure on $\Ch({\rm Flat}(X))$.  So every object is cofibrant and the fibrant objects are dg-cotorsion complexes which are flat on each degree. The trivial objects are those in $\Ch_{ac}({\rm Flat}(X)) = \widetilde{\mathcal{F}}$, the class of acyclic complexes with flat cycles. The corresponding homotopy category is the derived category of flat sheaves, $\mathcal D(\mathrm{Flat}(X))$.

\section{Purity}\label{sec-purity}

In this section we gather some known facts regarding purity which will be used ahead.

\subsection{Purity in locally presentable categories}
Let $\lambda$ be a regular cardinal.
\begin{definition}\cite[2.1, page 68]{AR}
A category $\C$ is called $\lambda$-accessible if $\C$ has $\lambda$-directed colimits and $\C$ has a set of $\lambda$-presentable objects such that every object in $\C$ is a $\lambda$-directed colimit of objects from that set.
\end{definition}
\begin{definition}\cite[1.17, pg 21]{AR}
$\C$ is called locally $\lambda$-presentable if  it is cocomplete and $\lambda$-accessible.
\end{definition}
\begin{definition}\cite[2.27, page 85]{AR}\label{def-pure}
Let $f:A \rightarrow B$ be a morphism in $\C$. It is said to be $\lambda$-pure if for any commutative diagram
$$\xymatrix{A'\ar[r]^{f'}\ar[d]^u & B' \ar[d]^v\\
A\ar[r]^f & B}$$
where $A',B'$ are $\lambda$-presentable, there is a morphism $g: B' \rightarrow A$ such that $u = g \circ f'$.
\end{definition}
\begin{proposition}\cite[2.29, page 86]{AR}
Every $\lambda$-pure morphism in a $\lambda$-accessible category is a monomorphism.
\end{proposition}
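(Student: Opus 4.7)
The plan is to derive the monomorphism property by a routine $\lambda$-directed colimit chase, invoking the hypothesis of $\lambda$-purity only once, at the very end, to produce a retraction. Suppose $u_1, u_2 \colon C \to A$ satisfy $f u_1 = f u_2$; the goal is to show $u_1 = u_2$. First I would reduce to the case that $C$ itself is $\lambda$-presentable: write $C = \operatorname{colim}_i C_i$ as a $\lambda$-directed colimit of $\lambda$-presentables $C_i$, which is possible because $\C$ is $\lambda$-accessible, and note that $u_1 = u_2$ will follow from $u_1 \iota_i = u_2 \iota_i$ for every colimit insertion $\iota_i \colon C_i \to C$, by the universal property of the colimit.

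So assume $C$ is $\lambda$-presentable. Write $A = \operatorname{colim}_k A_k$ and $B = \operatorname{colim}_j B_j$ as $\lambda$-directed colimits of $\lambda$-presentables, with canonical maps $\alpha_k \colon A_k \to A$ and $\beta_j \colon B_j \to B$. The defining property of $\lambda$-presentability is that $\Hom(C,-)$ preserves $\lambda$-directed colimits, so $u_1$ and $u_2$ both factor through a common $A_{k_0}$: there exist $v_1, v_2 \colon C \to A_{k_0}$ with $u_i = \alpha_{k_0} v_i$. Likewise, $f\alpha_{k_0} \colon A_{k_0} \to B$ factors as $\beta_j w$ for some $\lambda$-presentable $B_j$ and some $w \colon A_{k_0} \to B_j$. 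The equality $f u_1 = f u_2$ now reads $\beta_j (w v_1) = \beta_j (w v_2)$ in $\Hom(C,B)$; appealing once more to the fact that $C$ is $\lambda$-presentable, two elements of $\operatorname{colim}_j \Hom(C, B_j)$ that become equal in $\Hom(C,B)$ must already agree after passing to a sufficiently large index, so there is a transition map $\beta_{j,j'} \colon B_j \to B_{j'}$ with $\beta_{j,j'} w v_1 = \beta_{j,j'} w v_2$. Set $w' = \beta_{j,j'} w \colon A_{k_0} \to B_{j'}$.

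At this stage I apply $\lambda$-purity to the commuting square with top edge $w'$, left edge $\alpha_{k_0}$, right edge $\beta_{j'}$, and bottom edge $f$, in which both upper corners $A_{k_0}$ and $B_{j'}$ are $\lambda$-presentable; this produces $g \colon B_{j'} \to A$ with $\alpha_{k_0} = g w'$. Then
$$u_1 = \alpha_{k_0} v_1 = g w' v_1 = g w' v_2 = \alpha_{k_0} v_2 = u_2,$$
which closes the argument. There is no real obstacle here: the proof is a standard filtered-colimit manipulation, and the only essential analytic ingredient is the single invocation of $\lambda$-purity. The one place to be careful is the twofold use of the defining property of a $\lambda$-presentable object $C$—first to factor $u_1, u_2$ through a common $A_{k_0}$, and then to equalize the resulting two maps into $B_j$ at some later stage $B_{j'}$.
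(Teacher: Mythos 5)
Your argument is correct and is essentially the standard directed-colimit chase given in Ad\'amek--Rosick\'y for \cite[2.29]{AR}; the paper itself states this proposition only as a citation and supplies no proof of its own. The reduction to a $\lambda$-presentable source, the factorization of $u_1,u_2$ through a common $A_{k_0}$, the equalization in $B_{j'}$, and the single application of $\lambda$-purity to obtain the retraction $g$ with $\alpha_{k_0}=g w'$ all check out.
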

\begin{proposition}\label{lp}\cite[2.30, page 86]{AR}
Let $\C$ be a locally $\lambda$-presentable category. Then a morphism is a $\lambda$-pure monomorphism if and only if it is a $\lambda$-directed colimit of split monomorphisms.
\end{proposition}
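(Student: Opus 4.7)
My plan divides the equivalence into the two standard implications.

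For the easy direction ($\Leftarrow$): every split monomorphism is trivially $\lambda$-pure, since the retraction supplies a diagonal in any test square, so it suffices to show that $\lambda$-pure morphisms are closed under $\lambda$-directed colimits. Given a test square with $\lambda$-presentable top row $A', B'$, I would use $\lambda$-presentability of $A'$ and $B'$ together with the $\lambda$-directedness of the indexing system to factor both vertical maps through a common stage. A further use of $\lambda$-presentability of $A'$ makes the square strictly commute at some sufficiently advanced stage, and the lift at that stage (which exists by the assumed $\lambda$-purity there) post-composed with the transition map into $A$ yields the required diagonal.

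For the hard direction ($\Rightarrow$), the plan is to realize $f:A\to B$ as the colimit in the arrow category $\C^{\to}$ of a $\lambda$-filtered indexing category $\mathcal J$ whose objects are split monomorphisms $m:X\to Y$ between $\lambda$-presentable objects equipped with compatible morphisms $(u,v):m\to f$. Using local $\lambda$-presentability of $\C$, one shows that the forgetful functors $\mathcal J\to \C_\lambda\downarrow A$ and $\mathcal J\to \C_\lambda\downarrow B$ are cofinal; it follows that the source and target colimits over $\mathcal J$ are $A$ and $B$, and the induced arrow must be $f$ by universality. Upper bounds in $\mathcal J$ arise from coproducts, using that coproducts in $\C_\lambda$ of split monos are again split monos in $\C_\lambda$.

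The main obstacle is verifying coequalization of parallel pairs in $\mathcal J$. Given parallel morphisms $(\alpha_i,\beta_i):(X_1,Y_1,m_1)\rightrightarrows(X_2,Y_2,m_2)$, one must produce a morphism $(X_2,Y_2,m_2)\to(X_3,Y_3,m_3)$ in $\mathcal J$ coequalizing both pairs with $m_3$ still a split mono between $\lambda$-presentables. Naively coequalizing on the $X$-side via a coequalizer in $\C_\lambda$, then pushing $m_2$ out, coequalizes on the $X$-side but can fail on the $Y$-side and may destroy the splitting. This is precisely where the $\lambda$-purity hypothesis on $f$ enters: purity supplies a diagonal to a test square encoding the $Y$-side discrepancy $\beta_1-\beta_2$, and this diagonal allows one to enlarge the pushout construction so that $m_3$ remains a split mono while coequalizing simultaneously on both sides. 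One sees the indispensability of purity concretely from the fact that non-pure monomorphisms such as $\Z\xrightarrow{2}\Z$ in the category of abelian groups admit the same formal index of split-mono squares, but the required coequalization there cannot be performed without leaving $\mathcal J$, which is what obstructs the naive construction.
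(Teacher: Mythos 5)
First, a remark on the comparison itself: the paper offers no proof of Proposition~\ref{lp} --- it is quoted verbatim from Ad\'amek--Rosick\'y \cite[2.30]{AR} --- so your attempt can only be measured against the standard argument there. Your easy direction is fine: split monomorphisms are $\lambda$-pure via the retraction, and closure of $\lambda$-purity under $\lambda$-directed colimits follows exactly as you sketch (factor $u$ and $v$ through a common stage, use $\lambda$-presentability of $A'$ once more to make the relevant triangle commute at a later stage, then compose the retraction there with the colimit cocone).

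The hard direction, however, has a genuine gap: the one place where $\lambda$-purity actually does work is asserted rather than carried out. Two concrete problems. (1) The diagonal that purity supplies is a morphism $g\colon B'\to A$ into the \emph{large} object $A$, whereas every object of your $\mathcal{J}$ must be an arrow between $\lambda$-presentable objects; you never perform the required factorization of $g$. The standard proof writes $A=\mathrm{colim}\,A_i$ as a $\lambda$-directed colimit of $\lambda$-presentables, factors $g=\alpha_i g'$ and $u=\alpha_i u'$ through a common stage, and uses $\lambda$-presentability of $A'$ again to arrange $g'f'=u'$ at a possibly later stage. (2) You never construct the split replacement $(X_3,Y_3,m_3)$. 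With $g'f'=u'$ in hand it is the pushout $f''\colon A_i\to A_i\amalg_{A'}B'$ of $f'\colon A'\to B'$ along $u'$: the cotuple $[1_{A_i},g']$ descends to the pushout precisely because $g'f'=u'$ and splits $f''$, while $\alpha_i$ and $[f\alpha_i,v]$ make $f''$ an object of the canonical diagram of $f$ receiving a morphism from $(f',u,v)$. This shows the split objects form a cofinal full subcategory of the canonical $\lambda$-filtered diagram of $f$ in the arrow category, and such a subcategory is automatically $\lambda$-filtered --- so the coequalization of parallel pairs that you identify as ``the main obstacle'' never has to be done by hand; it is an artifact of your decision to build $\mathcal{J}$ abstractly and prove its filteredness directly, and as written that step (``purity supplies a diagonal encoding $\beta_1-\beta_2$\,'') is only a promise. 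Until (1) and (2) are supplied the implication is not established. (In the additive setting of this paper there is also a shorter route --- write $B/A=\varinjlim F_i$ with $F_i$ $\lambda$-presentable, pull back $0\to A\to B\to B/A\to 0$ along each $F_i\to B/A$, note purity splits each pullback, and realize $f$ as $\varinjlim(A\to A\oplus F_i)$ --- but that relies on the splitting characterization of purity rather than the lifting definition \ref{def-pure} used here.)
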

\begin{theorem}\label{sp}\cite[2.33]{AR}
(Every $\lambda$-accessible  category has enough $\lambda$-pure subobjects.) Let $\C$ be a $\lambda$-accessible category. There exist arbitrary large regular cardinals $\gamma \rhd \lambda$ such that  every $\gamma$-presentable subobject $A$ of $B$ in $\C$ is contained in a $\lambda$-pure subobject $\overline{A}$ of $B$, where $\overline{A}$ is $\gamma$-presentable.
\end{theorem}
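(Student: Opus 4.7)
The plan is to carry out a downward Löwenheim--Skolem construction inside a fixed presentation of $B$ as a colimit of $\lambda$-presentable objects, using the sharply-larger relation $\gamma \rhd \lambda$ to keep everything within cardinality $<\gamma$.

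First I would fix the setup. Choose $\gamma \rhd \lambda$ large enough that two closure properties hold: (i) every colimit of fewer than $\gamma$ many $\lambda$-presentable objects is $\gamma$-presentable, and (ii) the skeletally small class of $\lambda$-presentable objects and $\lambda$-small diagrams between them has fewer than $\gamma$ isomorphism classes. Using $\lambda$-accessibility, write $B = \operatorname{colim}_{i \in I} B_i$ as a $\lambda$-directed colimit of $\lambda$-presentable objects $B_i$. Given a $\gamma$-presentable subobject $A \hookrightarrow B$, the $\gamma$-presentability of $A$ together with $\gamma \rhd \lambda$ lets the inclusion factor through $\operatorname{colim}_{i \in I_0} B_i$ for some $\lambda$-directed $I_0 \subseteq I$ with $|I_0|<\gamma$.

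Next I would build a chain $I_0 \subseteq I_1 \subseteq \cdots$ of $\lambda$-directed subsets of $I$, each of cardinality $<\gamma$, recording the intermediate subobjects $A_\alpha := \operatorname{colim}_{i\in I_\alpha} B_i \hookrightarrow B$. At a successor stage $\alpha+1$, for every commutative square
\[
\xymatrix{A' \ar[r]^{f'} \ar[d]_u & B' \ar[d]^v \\ A_\alpha \ar[r] & B}
\]
with $A', B'$ $\lambda$-presentable, the map $v$ factors as $B' \to B_k \to B$ for some $k \in I$, and $u$ factors as $A' \to B_i \to A_\alpha$ for some $i \in I_\alpha$. Because $A'$ is $\lambda$-presentable, the two composites from $A'$ into $B$ agree already at some index $k' \geq i,k$ in the diagram. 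I would enlarge $I_\alpha$ by adding all such $k, k'$ arising from the (fewer than $\gamma$ many) equivalence classes of such squares, and then close under $\lambda$-directedness; property (ii) of $\gamma$ keeps the cardinality $<\gamma$. At limit ordinals take unions. After $\gamma$ steps (or at any suitable sharply-bigger stopping point) put $\overline{A} := \operatorname{colim}_{i \in I_\infty} B_i$, which is $\gamma$-presentable by property (i) and contains $A$ by construction.

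Finally I would verify $\lambda$-purity of the inclusion $\overline{A} \hookrightarrow B$: any square with $\lambda$-presentable corners was accounted for at some stage $\alpha$, which by construction provides a lift $g : B' \to \overline{A}$ with $g \circ f' = u$; Definition~\ref{def-pure} is then satisfied. The main obstacle I anticipate is the cardinal bookkeeping that makes this whole scheme internally consistent, namely isolating the precise meaning of $\gamma \rhd \lambda$ so that the number of lifting obligations at each stage is strictly less than $\gamma$ and the $\lambda$-directed closure does not inflate past $\gamma$. A secondary technical point is the step where the outer commutativity of the square is lifted from the colimit $B$ to some concrete $B_{k'}$ in the diagram; this relies critically on $A'$ being $\lambda$-presentable and on $I$ being $\lambda$-directed, and it is really what forces one to choose $\gamma$ sharply bigger rather than merely bigger than $\lambda$.
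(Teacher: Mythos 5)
The paper does not actually prove this statement: it is quoted verbatim as \cite[2.33]{AR}, so there is no in-paper argument to compare against. In Ad\'amek--Rosick\'y the result is deduced from the companion fact that, for suitable $\gamma\rhd\lambda$, every object is a $\gamma$-directed colimit of $\gamma$-presentable objects along $\lambda$-pure monomorphisms, which is itself proved by the downward L\"owenheim--Skolem closure you describe; so your overall strategy is the standard and correct one.

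There are, however, two concrete gaps in the cardinal bookkeeping, which is precisely the part you flag as the main obstacle. First, the number of lifting obligations at stage $\alpha$ is \emph{not} controlled by the number of isomorphism classes of $\lambda$-presentable objects and small diagrams between them (your property (ii)): a square is the data of $f'\colon A'\to B'$ together with actual morphisms $u\in\Hom(A',A_\alpha)$ and $v\in\Hom(B',B)$, and $\Hom(B',B)$ can have cardinality $\geq\gamma$ no matter how $\gamma$ is chosen, since $B$ is fixed and arbitrary; so ``adding all such $k,k'$'' may add $\geq\gamma$ indices. The repair is to observe that the diagonal required by Definition~\ref{def-pure} only has to satisfy $g\circ f'=u$, so the choice of $v$ is irrelevant: it suffices to discharge one obligation for each tuple $(A',B',f',j,u_0)$ with $j\in I_\alpha$ and $u_0\in\Hom(A',B_j)$ for which \emph{some} completion $v$ exists, adding a single witnessing index $k'$ per tuple. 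Bounding the number of such tuples below $\gamma$ then requires $\gamma$ to exceed the cardinality of the set of all morphisms between representatives of the $\lambda$-presentable objects, which is strictly more than counting isomorphism classes. Second, running the chain for $\gamma$ steps destroys the conclusion: $|I_\infty|$ would then be $\gamma$ rather than $<\gamma$, so your property (i) no longer yields $\gamma$-presentability of $\overline{A}$. The chain must stop at $\omega$, which suffices because an obligation over $A_\infty$ involves a single index $j$ lying in some $I_n$ and is discharged at stage $n+1$. With these two corrections, together with the remark that a $\lambda$-pure morphism is automatically monic (the quoted Proposition~\cite[2.29]{AR}), so that $\overline{A}\to B$ is genuinely a subobject containing $A$, the argument goes through.
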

\subsection{Purity in a locally presentable monoidal category}
Let $\C$ be a locally $\lambda$-presentable and symmetric monoidal category. Let $\mathcal{G}$ be a generating set of locally $\lambda$-presentable objects. Suppose that $\C$ has images and $\otimes$ preserves $\lambda$-colimits (for in case that $\otimes$ is not \emph{closed}).

\begin{definition}\cite{Fox}\label{def-fox}
A monomorphism $f:X \rightarrow Y$ is called $\otimes$-pure if $f\otimes Z$ is monomorphism for all $Z \in \C$.
\end{definition}
\begin{remark}\label{rem}
By Proposition \ref{lp} and the fact that $\otimes$ preserves $\lambda$-colimits, if a morphism is $\lambda$-pure then it is $\otimes$-pure.
\end{remark}

\subsection{Purity in a closed symmetric monoidal Grothendieck category}
Let $\C$ be a closed symmetric monoidal Grothendieck category. In \cite[Proposition 3]{Beke} Beke showed that every Grothendieck category is locally presentable, i.e., there is a regular cardinal $\lambda$ for which $\C$ is locally $\lambda$-presentable category. And in fact it can be shown that every object is presentable. 

\begin{proposition}\label{ps}
Let $\{P_i;\ \psi_{ij}:P_i \rightarrow P_j\}_I$ be a $\lambda$-directed system in  $\C$. Then the canonical morphism $\bigoplus_I P_i \rightarrow  {\rm colim} P_i \rightarrow 0$ is  $\lambda$-pure epic. So it is also $\otimes$-pure epic.
\end{proposition}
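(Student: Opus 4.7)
The plan is to let $\pi : \bigoplus_{i\in I} P_i \twoheadrightarrow P := \operatorname{colim}_I P_i$ be the canonical surjection (it is an epimorphism because $\C$ is cocomplete abelian), put $K := \ker \pi$, and prove that the inclusion $\iota : K \hookrightarrow \bigoplus_I P_i$ is a $\lambda$-pure monomorphism in the sense of Definition~\ref{def-pure}. Once this is done, the $\otimes$-pure conclusion is immediate from Remark~\ref{rem}, so the entire content is the $\lambda$-pure lifting property for $\iota$.

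To verify that lifting, I start from an arbitrary test square with $\lambda$-presentable corners $A', B'$ and maps $f : A' \to K$, $g : B' \to \bigoplus_I P_i$, $u' : A' \to B'$ satisfying $\iota f = g u'$, and I aim to produce $h : B' \to K$ with $hu' = f$. Since $B'$ is $\lambda$-presentable, $g$ factors through some sub-coproduct $\bigoplus_{j\in J} P_j$ with $|J| < \lambda$, say $g = \iota_{J,I}\circ g'$; by $\lambda$-directedness of $I$ pick an upper bound $k \in I$ of $J$, and assemble from the transition maps $\psi_{jk}$ a map $\Psi_k : \bigoplus_{j\in J} P_j \to P_k$ satisfying $\pi \circ \iota_{J,I} = \varphi_k \circ \Psi_k$, where $\varphi_k : P_k \to P$ is the colimit leg. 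The candidate lift will be
\[
h \; := \; g \;-\; \iota_l \circ \Psi_l \circ g' : B' \longrightarrow \bigoplus_I P_i,
\]
for a carefully chosen $l \geq k$, where $\Psi_l := \psi_{kl}\circ \Psi_k$ and $\iota_l : P_l \hookrightarrow \bigoplus_I P_i$ is the summand inclusion. By construction $\pi h = 0$, so $h$ factors through $\iota$ as $h = \iota h'$, and one checks $\iota h' u' = \iota f$, hence $h' u' = f$ because $\iota$ is monic.

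The technical heart of the argument, and the main obstacle, is the choice of $l$: the naive choice $l = k$ only forces $h$ to land in $K$, but does not make $h$ restrict to $f$ on $A'$. To fix this, I would observe that the composition $\varphi_k \circ \Psi_k \circ g' \circ u' : A' \to P$ equals $\pi \iota f = 0$, and then invoke $\lambda$-presentability of $A'$ together with the standard characterization that $\Hom(A',-)$ commutes with $\lambda$-directed colimits (so $\Hom(A',P) = \operatorname{colim} \Hom(A',P_i)$). This yields some $l \geq k$ in $I$ with $\psi_{kl} \circ \Psi_k \circ g' \circ u' = 0$ already in $P_l$, which is exactly what is needed for the term $\iota_l \Psi_l g' u'$ to vanish and for the verification $hu' = f$ to go through. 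Everything else is routine bookkeeping with the coproduct decomposition and the coherence $\psi_{jl} = \psi_{kl}\circ\psi_{jk}$ of the directed system.
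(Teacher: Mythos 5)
Your argument is correct, but it takes a genuinely different route from the paper's. The paper does not touch the kernel at all: it writes ${\rm colim}\, P_i$ as the cokernel of the standard map $\bigoplus_{\rho}P_{s(\rho)}\to\bigoplus_I P_i$, observes that the colimit legs satisfy $\alpha_i = g\circ\iota_i$, and then notes that any $f\colon H\to {\rm colim}\, P_i$ with $H$ $\lambda$-presentable factors through some $\alpha_i$ (by $\lambda$-presentability of $H$ and $\lambda$-directedness of the system), hence through $g$. This shows at a stroke that the sequence is $\Hom(H,-)$-exact for every $\lambda$-presentable $H$, which is the characterization of $\lambda$-pure exactness the paper uses. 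You instead verify the diagram-lifting condition of Definition~\ref{def-pure} directly for the kernel inclusion $K\hookrightarrow\bigoplus_I P_i$, and your construction is sound: the correction term $\iota_l\circ\Psi_l\circ g'$ does force the candidate lift into $K$, and the crucial enlargement of $l$ (using that $\Hom(A',-)$ commutes with $\lambda$-directed colimits, so that $\varphi_k\circ\Psi_k\circ g'\circ u'=0$ implies $\psi_{kl}\circ\Psi_k\circ g'\circ u'=0$ for some $l\geq k$) is exactly the step needed to make $h u'=g u'$ and hence $h'u'=f$. The trade-off is that your proof is considerably longer and, strictly speaking, each proof establishes one of two standard equivalent formulations of ``$\lambda$-pure epic'' (the lifting property for the kernel versus $\Hom(H,-)$-exactness); the paper silently passes between them, as do you implicitly. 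The paper's approach buys brevity by testing against the quotient rather than the kernel; yours buys an explicit witness for the lifting in Definition~\ref{def-pure} without invoking that equivalence in the forward direction.
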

\begin{proof}
For any $\rho: i \rightarrow j$, $s(\rho)=i$ and $t(\rho)=j$. For each $i\in I$, let us denote by $\iota_i:P_i \rightarrow \bigoplus_I P_i$ and $\pi_i : \bigoplus P_i \rightarrow P_i$ the canonical injection and projection maps respectively. Consider $l_{\rho}:=\iota_{t(\rho)} \circ \psi_{ij}-\iota_{s(\rho)}:  P_{s(\rho)} \rightarrow \bigoplus_I P_i$, which is monic for all morphism $\rho$ in $I$, (actually it splits). So, we have the induced morphism $(l_\rho)_{\rho}: \bigoplus_{\rho}P_{s(\rho)} \rightarrow \bigoplus_I P_i$. We know that ${\rm colim} P_i = \coker (l_{\rho})_{\rho}$. That is, there is an exact sequence
$$\xymatrix{0 \ar[r]& \sum_{\rho}\im l_{\rho} \ar[r]& \bigoplus_I P_i \ar[r]^g & {\rm colim} P_i \ar[r] & 0}.$$
Then $\alpha_i:= g \circ \iota_i$ is the family of morphisms $\alpha_i : P_i \rightarrow {\rm colim} P_i$ with $\alpha_j \circ \psi_{ij}= \alpha_i$ for each $\rho:i \to j$.
Let $f: H\rightarrow {\rm colim} P_i $ be a morphism where $H$ is $\lambda$-presentable. Then $f$ factors through  $\alpha_i$ for some $i$, that is, there is a morphism $f': H \rightarrow P_i$ such that $\alpha_i \circ f' =f$. But $\alpha_i = g \circ \iota_i$,  $g\circ \iota_i \circ f' =f$. That is, that exact sequence is $\Hom(H,-)$-exact for each $\lambda$-presentable object $H$, which means that it is $\lambda$-pure exact.
\end{proof}
\subsection{Stalkwise-purity in $\Qco(X)$}
Let $X$ be a scheme with associated structure sheaf $\mathcal O_X$. The category $\Qco(X)$ is a closed symmetric monoidal Grothendieck category, with the closed structure coming from the \emph{coherator} functor $Q$ applied to the usual sheafhom. Therefore we can define $\otimes$-pure monomorphisms as in Definition~\ref{def-fox}. But also we can give a local notion of purity in term of the stalks: a monomorphism $f:\scrF\to \scrG$ in $\Qco(X)$ is called stalk-wise pure if for each $x \in X$ the induced morphism $f_x:\scrF_x\to \scrG_x$ on the stalks is pure in $\mathcal O_{X,x}\Mod$. The next proposition, adapted from\cite[Propositions 3.3 and 3.4]{EEO}, relates the two notions. The careful reader will notice that the definition of $\otimes$-pure used here is slightly different than the one used in~\cite{EEO}. But they agree when $X$ is quasi-separated. See~\cite[Remark~3.5]{EEO}.
\begin{proposition}\label{prop-pure}
Let $X$ be a quasi-separated scheme, and $f:\scrF\to \scrG$ a monomorphism in $\Qco(X)$. The following assertions are equivalent:
\begin{enumerate}
\item $f$ is $\otimes$-pure.
\item There exists an open affine covering $\mathscr{U}=\{U_i\}_{i\in I}$ of $X$ such that $f_{U_i}$ is pure in $\mathcal O_X(U_i)\Mod$.
\item $f$ is stalk-wise pure.

\end{enumerate}
\end{proposition}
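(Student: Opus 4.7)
My plan is to prove $(1)\Rightarrow(2)\Rightarrow(3)\Rightarrow(1)$ in a cycle, leveraging three standard facts about quasi-coherent sheaves on a quasi-separated scheme: (a) the tensor product of quasi-coherent sheaves commutes with restriction to any open and with taking stalks; (b) over an affine open $U=\Spec R$, $\Qco(U)\cong R\Mod$ and the $\otimes$-pure monomorphisms of sheaves are precisely the pure monomorphisms of $R$-modules; (c) for $X$ quasi-separated and $j_i:U_i\hookrightarrow X$ an affine open immersion, the pushforward $j_{i*}$ preserves quasi-coherence, so the restriction functor $(-)|_{U_i}\colon\Qco(X)\to\Qco(U_i)$ is essentially surjective. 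An elementary but crucial input is that purity of a module map is preserved under arbitrary base change: if $f\colon M\to N$ is a pure monomorphism of $R$-modules and $R\to S$ is any ring map, then $f\otimes_R S$ is pure as a morphism of $S$-modules, because $(f\otimes_R S)\otimes_S L=f\otimes_R L$ is injective for every $S$-module $L$.

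For $(1)\Rightarrow(2)$, I would fix any affine open cover $\{U_i\}$ of $X$ and show each $f_{U_i}$ is pure over $\mathcal{O}_X(U_i)$. Given an $\mathcal{O}_X(U_i)$-module $K$, form the quasi-coherent sheaf $\scrST=j_{i*}\widetilde{K}$ on $X$, which is well-defined by (c). By hypothesis $f\otimes\scrST$ is a monomorphism in $\Qco(X)$; restricting to $U_i$ and using (a) together with $\scrST|_{U_i}=\widetilde{K}$, one gets that $f_{U_i}\otimes_{\mathcal{O}_X(U_i)}K$ is injective. Since $K$ was arbitrary, $f_{U_i}$ is pure.

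For $(2)\Rightarrow(3)$, take $x\in X$ and choose $U_i$ from the covering with $x\in U_i$. The stalk $f_x$ is the base change of $f_{U_i}$ along the localization $\mathcal{O}_X(U_i)\to\mathcal{O}_{X,x}$, so by the base-change preservation of purity recorded above it is pure over $\mathcal{O}_{X,x}$. For $(3)\Rightarrow(1)$, let $\scrST\in\Qco(X)$ be arbitrary. By (a) we have $(f\otimes\scrST)_x=f_x\otimes_{\mathcal{O}_{X,x}}\scrST_x$ for every $x\in X$, and this is injective by stalkwise purity of $f$. Since a morphism of $\mathcal{O}_X$-modules is a monomorphism if and only if every stalk is, we conclude that $f\otimes\scrST$ is a monomorphism, whence $f$ is $\otimes$-pure.

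The main obstacle is $(1)\Rightarrow(2)$, where quasi-separatedness enters in an essential way: without it, $j_{i*}\widetilde{K}$ need not be quasi-coherent, and there is no obvious way to test purity of $f_{U_i}$ against arbitrary $\mathcal{O}_X(U_i)$-modules using only sheaves in $\Qco(X)$. Once hypothesis (c) secures this lifting of modules to quasi-coherent sheaves, the remaining implications are a routine combination of the base-change behaviour of pure monomorphisms with the locality of the tensor product and of the property of being a monomorphism.
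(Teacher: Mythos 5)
Your proposal is correct and follows essentially the same route as the paper: quasi-separatedness is used exactly as you describe, to push forward an arbitrary module $\widetilde{K}$ from an affine open and stay inside $\Qco(X)$ for $(1)\Rightarrow(2)$, while $(2)\Rightarrow(3)$ and $(3)\Rightarrow(1)$ are handled by base-change stability of purity and the stalkwise criteria for monomorphisms and tensor products. The only cosmetic difference is that you isolate the base-change lemma explicitly, whereas the paper computes sections over $U$ directly.
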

\begin{proof}
$1.\Rightarrow 2.$ Let  $U$ be an affine open subset of $\mathscr{U}$ and $i:U\hookrightarrow X$ be the open immersion. And let $M\in \O_X(U)\Mod$. Since $X$ is quasi-separated, then $i_*(\widetilde{M})$ is a quasi-coherent $\O_X$-module. Therefore $$0\to i_*(\widetilde{M})\otimes \scrF\to i_*(\widetilde{M})\otimes \scrG  $$is exact. But then $$0\to (i_*(\widetilde{M})\otimes \scrF)(U)\to (i_*(\widetilde{M})\otimes \scrG)(U)  $$ is exact in $\O_X(U)\Mod$, that is $$0\to i_*(\widetilde{M})(U)\otimes \scrF(U)\to i_*(\widetilde{M})(U)\otimes \scrG(U)  $$is exact. Since, for each $\O_X(U)$-module $A$, $i_*(\widetilde{A})(U)=A$, we get that $0\to M\otimes \scrF(U)\to M\otimes \scrG(U)$ is exact. Thus $0\to \scrF(U)\to \scrG(U)$ is pure.

\medskip\par\noindent
$2.\Rightarrow 3.$ Let $x\in X$. Then there exists $U_i\in \mathscr{U}$ such that $x\in U_i=Spec(A_i)$, for some ring $A_i$. But then the claim follows by observing that $\scrF_x=(\widetilde{\scrF(U_i)})_x\cong  \widetilde{\scrF(U_i)_x}$ and noticing that if $0\to M\to N$ is pure exact in $A_i\Mod$, then $0\to M_x\to N_x$ is pure exact in $(A_i)_x\Mod$.

\medskip\par\noindent
$3. \Rightarrow 1.$ Let  $\scrF\stackrel{f}{\to} \scrG$ be a monomorphism in $\Qco(X)$ (so, for each $x\in X$, $0\to \scrF_x\stackrel{f_x}{\to} \scrG_x$ is exact in $\O_{X,x}\Mod$). Given $\scrM\in \Qco(X)$, the induced $ \scrM\otimes \scrF\stackrel{id\otimes f}{\longrightarrow}\scrM\otimes \scrG$ will be a monomorphism if, and only if, for each $x\in X$ the morphism of $\O_{X,x}$-modules $(\scrM\otimes \scrF)_x\stackrel{(id\otimes f)_x}{\longrightarrow}(\scrM\otimes \scrG)_x$ is such. But, for each $x\in X$, and $\scrA\in \Qco(X)$, $(\scrM\otimes \scrA)_x\cong \scrM_x\otimes \scrA_x$. So, by $3.$ we follow that $ \scrM\otimes \scrF\stackrel{id\otimes f}{\longrightarrow}\scrM\otimes \scrG$ is a monomorphism. Therefore $\scrF\stackrel{f}{\to} \scrG$ is $\otimes$-pure.
\end{proof}

\section{The pure-injective model structure}

Our main goal here is to prove Theorem A of the introduction and its Corollary. 

\subsection{Exact categories of Grothendieck type}
Let $\C$ be an exact category.

\begin{definition}\cite[Definition 3.2]{J}
Let $\alpha$ be an ordinal number, and let \newline $ (X_{\beta}, f_{\beta\beta'})_{\beta<\beta' < \alpha}$ be a direct system indexed by $\alpha$ in $\C$. Such a system is called a \emph{$\alpha$-sequence} if for each limit ordinal $\beta < \alpha$, the object $X_{\beta}$ together with the morphisms $f_{\mu\beta}$, $\mu < \beta$, is a colimit of the direct subsystem $(X_{\mu}, f_{\mu,\mu '})_{\mu< \mu' < \beta }$.
\end{definition}

\begin{definition}\cite[Definition 3.3]{J}
If $\C$ is an exact category,  $\kappa$ is  a cardinal number and $\mathcal{D}$ is a class of morphisms of $\C$, then an object $X \in \C$ is called \emph{$\kappa$-small relative to $\mathcal{D}$} if for every infinite regular cardinal $\alpha \geq  \kappa$ and every $\alpha$-sequence $(E_{\beta}, f_{\beta \beta'})_{\beta<\beta' < \alpha}$ in $\C$ such that $f_{\beta,\beta+1} \in \mathcal{D}$ for all $\beta+1 < \alpha$, the canonical map of sets
$$\varinjlim_{\beta< \alpha} \Hom_{\C}(X, E_{\beta}) \rightarrow \Hom(X,\varinjlim_{\beta < \alpha} E_{\beta})$$
is an isomorphism. The object $X$ is \emph{small relative to $\mathcal{D}$} if it is $\kappa$-small relative to $\mathcal{D}$ for some cardinal $\kappa$.
\end{definition}

\begin{definition}\cite[Definition 3.4]{J}\label{def-efficient}
An exact category $\C$ is called \emph{efficient} if
\begin{enumerate}
\item $\C$ is weakly idempotent complete. That is, every section
$s: X
\rightarrow Y$ in $\C$ has a cokernel or, equivalently, every retraction $r:Y\rightarrow Z$ in $\C$ has a kernel.
\item Arbitrary transfinite compositions of inflations exist and are
themselves inflations.
\item Every object of $\C$ is small relative to the class of all inflations.
\item $\C$ admits a generator. That is, there is an object $G \in \C$ such that every $X \in \C$ admits a deflation $G^{(I)} \rightarrow X \rightarrow 0$.
\end{enumerate}
\end{definition}
\begin{definition}\cite[Definition 3.11]{J}\label{def-Grot type}
An exact category $\C$ is said to be  of \emph{Grothendieck type} if it is efficient and it is deconstructible in itself, i.e, there is a set of objects $S \subset \C$ such that $\C = \Filt (S)$.
\end{definition}
 
Note that whenever $\C$ is an exact category, then the chain complex category $\bfC(\C)$ is also an exact category whose conflations are pointwise conflations in $\C$. Unless explicitly stated otherwise, $\bfC(\C)$ will always denote this exact structure. We get a notion of an exact complex (or acyclic complex) and we let $\bfC _{ac}(\C)$ denote the class of all exact complexes.

\begin{lemma}\label{Je}\cite[Lemma 7.10]{J}
Let $\C$ be an exact category of Grothendieck type such that $\bfC_{ac}(\C)$ is deconstructible in $\bfC(\C)$. If $(\mcF, \mcB)$ is a complete hereditary cotorsion pair in $\C$, then $(\F, \F^{\perp})$ is a complete (and hereditary) cotorsion pair in $\bfC (\C)$. 
\end{lemma}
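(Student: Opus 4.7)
The plan is to reduce to the general principle that a deconstructible class in an efficient exact category is the left half of a complete cotorsion pair, and then to verify the deconstructibility of $\F$ in $\bfC(\C)$.

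First, I would collect the ingredients. Since $(\mcF,\mcB)$ is a complete cotorsion pair in the Grothendieck-type exact category $\C$, standard theory produces a set $S \subseteq \mcF$ with $\mcF = \Filt(S)$. By hereditariness, every complex in $\F$ has components in $\mcF$: consecutive cycles fit into a conflation whose outer terms lie in $\mcF$, so extension closure applies. The category $\bfC(\C)$ inherits efficiency from $\C$, since disk and sphere complexes built from a generator of $\C$ yield a generator of $\bfC(\C)$, and weak idempotent completeness together with smallness lift degreewise.

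The technical heart of the proof is to show that $\F$ is deconstructible in $\bfC(\C)$. I would form a candidate set $\tilde{S}$ consisting of disk complexes $D^n(X)$ for $X \in S$ and $n \in \Z$, together with representatives of acyclic complexes of bounded cardinality with cycles in $\mcF$ drawn from the given deconstruction of $\bfC_{ac}(\C)$. The main obstacle is a Hill-lemma style argument: given $X \in \F$, one must build a continuous $\tilde{S}$-filtration of $X$ whose successive quotients are themselves in $\F$. This requires simultaneously threading the filtration of each cycle object coming from $\mcF = \Filt(S)$ and a filtration of $X$ as an object of $\bfC_{ac}(\C)$, at each stage enlarging to a subcomplex whose components, cycles, and complementary quotients all stay inside the relevant deconstructions while remaining of bounded size. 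This is where I expect the most set-theoretic bookkeeping.

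Once $\F$ is deconstructible in the efficient exact category $\bfC(\C)$, the standard theorem relating deconstructibility to completeness of cotorsion pairs yields that $(\F, \F^{\perp})$ is a complete cotorsion pair cogenerated by $\tilde{S}$. Hereditariness is the easy final step: given a conflation $0 \to X \to Y \to Z \to 0$ in $\bfC(\C)$ with $Y, Z \in \F$, the snake lemma applied degreewise shows $X$ is acyclic, and the induced short exact sequences $0 \to Z_n(X) \to Z_n(Y) \to Z_n(Z) \to 0$ of cycles, combined with hereditariness of $(\mcF,\mcB)$, place $Z_n(X) \in \mcF$, so $X \in \F$.
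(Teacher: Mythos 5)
The paper offers no proof of this lemma at all: it is imported verbatim from the cited source \cite[Lemma 7.10]{J}. Your overall strategy --- prove that $\F$ is deconstructible in $\bfC(\C)$ by a Hill-lemma style simultaneous filtration, then invoke the general theorem that a deconstructible class containing a generating set in an efficient exact category is the left half of a functorially complete cotorsion pair, and finally check hereditariness directly on the cycle objects --- is indeed the route taken there, and your hereditariness argument at the end is correct modulo the usual care with exact-category versions of the snake lemma.

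The genuine gap is your very first step. You claim that ``standard theory'' produces a set $S \subseteq \mcF$ with $\mcF = \Filt(S)$ from the mere completeness of $(\mcF,\mcB)$. No such theory exists: the standard implication runs the other way (deconstructible plus generating implies complete), and whether the left class of an arbitrary complete cotorsion pair in a Grothendieck category --- let alone in a Grothendieck-type exact category --- must be deconstructible, equivalently whether such a pair must be generated by a set, is not known in general. Even for set-generated pairs the left class is a priori only the class of direct summands of $S$-filtered objects, and identifying it with $\Filt(S')$ for some set $S'$ requires its own Hill-lemma argument. Since every later step of your proof, in particular the threading of filtrations of the cycles $Z_nX$ through the filtration of $X$ in $\bfC_{ac}(\C)$, feeds on the filtrations supplied by $\mcF = \Filt(S)$, the argument has no starting point without this input. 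In \cite{J} the deconstructibility of $\mcF$ is available as a hypothesis, and in the one place this paper applies the lemma one has $\mcF = \C$, which is deconstructible in itself precisely because $\C$ is of Grothendieck type; you should either add deconstructibility of $\mcF$ as a hypothesis or restrict to that situation. Two smaller points: the technical heart (the simultaneous filtration) is only described, not carried out; and to invoke the completeness theorem you must also arrange that your set $\tilde{S}$ contains a generating set of $\bfC(\C)$ lying inside $\F$, which is where the completeness of $(\mcF,\mcB)$ (i.e., the fact that $\mcF$ is generating) actually enters.
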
 

\noindent In the above lemma, we have used that $\mcF$ is extension closed and so inherits an exact structure from $\C$, and $\F = \bfC_{ac}(\mathcal{F})$.

\subsection{Pure exact structure}\label{subsec-pure exact struc}
Let $\C$ be a closed symmetric monoidal Grothendieck category. In Section~\ref{sec-purity} we noted that $\C$ is locally $\lambda$-presentable for some regular cardinal $\lambda$, and that there are two generally different notions of purity. Let $\mathcal{P}$ denote the proper class of $\lambda$-pure short exact sequences in $\C$ and $\mathcal{P}_{\otimes}$ denote the proper class of $\otimes$-pure short exact sequences in $\C$. By Remark~\ref{rem} we have the containment $\mathcal{P} \subseteq \mathcal{P}_{\otimes}$. Our main interest in this section will be the $\otimes$-pure exact structure. \textbf{\emph{So throughout the rest of this section, when we say pure exact we will always mean $\otimes$-pure exact}}, unless explicitly stated otherwise. We will denote by $\bfC(C)_{\otimes}$ the exact structure consisting of $\bfC(C)$ along with the componentwise pure exact sequences. 
\begin{lemma}\label{Gtype}
$\C$ with the pure exact structure is an exact category of Grothendieck type.
\end{lemma}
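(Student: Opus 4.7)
The plan is to verify the four conditions of efficiency (Definition~\ref{def-efficient}) for $(\C, \mathcal{P}_\otimes)$ and then argue deconstructibility separately. Weak idempotent completeness is automatic since $\C$ is abelian (so every retraction has a kernel), and split conflations are $\otimes$-pure. For transfinite compositions: a transfinite composition of monomorphisms in any Grothendieck category is monic because filtered colimits are exact, and since $\C$ is closed monoidal, $-\otimes S$ is a left adjoint and hence preserves colimits for every $S\in\C$. Thus, tensoring a transfinite composition of $\otimes$-pure monomorphisms with $S$ yields the transfinite composition of the tensored (monic) maps, which is monic. Smallness of every object relative to the class of inflations is immediate from the observation, recorded just before Proposition~\ref{ps}, that every object of a Grothendieck category is presentable.

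For the generator, let $\{G_\alpha\}_\alpha$ be a set of representatives of isomorphism classes of $\lambda$-presentable objects, and set $G=\bigoplus_\alpha G_\alpha$. Given any $X\in\C$, write $X=\operatorname{colim}_{i\in I} X_i$ as a $\lambda$-directed colimit of $\lambda$-presentable subobjects. By Proposition~\ref{ps}, the canonical map $\bigoplus_I X_i \twoheadrightarrow X$ is $\lambda$-pure epic, hence $\otimes$-pure epic by Remark~\ref{rem}. Since each $X_i$ is isomorphic to some $G_{\alpha(i)}$, $\bigoplus_I X_i$ is a direct summand of $G^{(J)}$ for an appropriate index set $J$, and composing with the split epi yields a $\otimes$-pure deflation $G^{(J)}\twoheadrightarrow X$.

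Deconstructibility is the substantive step and will use Theorem~\ref{sp}. Fix a regular cardinal $\gamma\rhd\lambda$ as supplied there, and let $S$ be a set of representatives of isomorphism classes of $\gamma$-presentable objects of $\C$. For $X\in \C$, I build a continuous chain $0=X_0\subseteq X_1\subseteq\cdots\subseteq X_\kappa=X$ of $\lambda$-pure (hence $\otimes$-pure) subobjects with $X_{\alpha+1}/X_\alpha\in S$, as follows. Fix a well-ordering of a $\lambda$-directed presentation $X=\operatorname{colim}_{i\in I}Y_i$ by $\lambda$-presentable subobjects. At a successor stage, having built a $\lambda$-pure subobject $X_\alpha\subseteq X$, consider the next $Y_i$ not yet contained in $X_\alpha$; the sum $X_\alpha + Y_i$ is $\gamma$-presentable, and Theorem~\ref{sp} encloses it in a $\lambda$-pure $\gamma$-presentable subobject $X_{\alpha+1}\subseteq X$. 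At limit stages, take unions, using the standard fact that a directed union of $\lambda$-pure subobjects is $\lambda$-pure. Since both $X_\alpha$ and $X_{\alpha+1}$ are $\lambda$-pure in $X$, the inclusion $X_\alpha\hookrightarrow X_{\alpha+1}$ is itself $\lambda$-pure (the lifting property of $X_\alpha\hookrightarrow X$ supplies the required factorization through $X_{\alpha+1}$), and the quotient $X_{\alpha+1}/X_\alpha$ is $\gamma$-presentable because $X_{\alpha+1}$ is and quotients of $\gamma$-presentables remain $\gamma$-presentable in a Grothendieck category; hence $X_{\alpha+1}/X_\alpha\in S$ up to isomorphism.

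The main obstacle is the last step: organizing the filtration so that both the purity of each inclusion in $(\C,\mathcal{P}_\otimes)$ and the boundedness of each quotient are controlled simultaneously. The two fine points to pin down are the inheritance of $\lambda$-purity from $X$ to subobjects lying between $X_\alpha$ and $X$, and the closure of $\lambda$-pure subobjects under directed unions at limit stages; both are standard consequences of the characterization of $\lambda$-purity in Proposition~\ref{lp}, but they are what make the filtration continuous and well-defined.
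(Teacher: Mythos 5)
Your verification of efficiency is correct and follows the same lines as the paper's: weak idempotent completeness from $\C$ being abelian, transfinite compositions of inflations from the fact that $-\otimes S$ preserves colimits, smallness from the presentability of every object, and the generator from Proposition~\ref{ps} together with Remark~\ref{rem}. The genuine gap is in the deconstructibility step. At a successor stage you propose to enclose $X_\alpha + Y_i$ in a $\gamma$-presentable $\lambda$-pure subobject using Theorem~\ref{sp}; but that theorem only encloses \emph{$\gamma$-presentable} subobjects, and $X_\alpha + Y_i$ ceases to be $\gamma$-presentable once $\alpha\geq\gamma$. Since the filtration must exhaust an arbitrary $X$, its length will in general far exceed $\gamma$, and at any stage $\alpha\geq\gamma$ the subobject $X_\alpha$ is a union of at least $\gamma$ many $\gamma$-presentable pieces, hence cannot be contained in any $\gamma$-presentable $X_{\alpha+1}$. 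Your construction therefore stalls at stage $\gamma$, and the accompanying claim that $X_{\alpha+1}/X_\alpha$ is $\gamma$-presentable because $X_{\alpha+1}$ is collapses at the same point.

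The repair is the one the paper's proof points to: work in the quotient. Given a $\lambda$-pure $X_\alpha\leq X$, apply Theorem~\ref{sp} to the image of $Y_i$ in $X/X_\alpha$ --- which is small regardless of how large $\alpha$ is --- to obtain a $\gamma$-presentable $\lambda$-pure subobject $\overline{A}\leq X/X_\alpha$, and let $X_{\alpha+1}$ be its preimage in $X$. Then $X_{\alpha+1}/X_\alpha=\overline{A}$ lies in $\C^{\leq\gamma}$ by construction, and $X_{\alpha+1}\leq X$ is pure by the fact the paper isolates for exactly this purpose: if $A\leq A'\leq B$ with $A\leq B$ and $A'/A\leq B/A$ pure-monic, then $A'\leq B$ is pure-monic. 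This ``two-out-of-three'' property is the missing ingredient; the two fine points you flag at the end (purity of $X_\alpha\leq X_{\alpha+1}$ and the limit stages) are indeed routine consequences of Proposition~\ref{lp}, but they are not where the difficulty lies.
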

\begin{proof}
It is routine to check that $\C$ along with the pure exact sequences form an exact category. So we check Definitions~\ref{def-efficient} and~\ref{def-Grot type}.  
First, we must prove that it is efficient. $(i)$ is clear since $\C$ is abelian and $(ii)$ is also clear since the tensor product preserves any colimit. Any object $X \in \C$ is $\kappa$-presentable for some cardinal $\kappa$, so $(iii)$ easily follows. Now $(iv)$ follows from Proposition~\ref{ps}. In detail since $\C$ is locally $\lambda$-presentable we have a set of $\lambda$-presentable objects for which each $X \in \C$ is the colimit of a $\lambda$-directed system $\{P_i;\ \psi_{ij}:P_i \rightarrow P_j\}_I$ with each $P_i$ in that set. Then by Proposition~\ref{ps} the canonical morphism $\bigoplus_I P_i \rightarrow  {\rm colim} P_i \rightarrow 0$ is a pure epimorphism as required.
So we conclude that $\C$ with the pure exact structure is efficient.  Finally by Theorem~\ref{sp}, Remark~\ref{rem}, and the fact that if $A\leq A' \leq B$ is such that $A\leq B$ and $A'/A \leq B/A$ are pure-monic in $\C$ then $A' \leq B $ is also pure-monic, we infer that there is a regular cardinal $\gamma$ such that $\C= \Filt (\C^{\leq \gamma})$. Here $\C^{\leq \gamma}$ is the class of all $\gamma$-presentable objects in $\C$ and the filtration is built in $\C$ with the pure-exact structure.
\end{proof}

A complex $C$ in $\bfC(C)$ is called \emph{$\otimes$-acyclic} if it is acyclic in $\bfC(\C)_{\otimes}$, the exact category of chain complexes with the pointwise pure exact structure. This means
each sequence $0 \rightarrow Z_nC \rightarrow C_n \rightarrow Z_{n-1}C \rightarrow 0$ is pure, or equivalently, $C \otimes S$ is exact for all $S \in \C$. We shall denote by $\bfC_{\otimes\textrm{-}ac}(\C)$ the class of all $\otimes$-acyclic complexes. Our aim is to construct the relative derived category of $\C$ with respect to the $\otimes$-pure proper class, that is, whose trivial objects are the $\otimes$-acyclic complexes. To achieve this aim, we will use Hovey's correspondence between cotorsion pairs and model category structures~\cite{hovey2}. We note that when the underlying category is abelian an exact structure on the category is the same thing as a proper class~\cite[Appendix~B]{G}. So the language of abelian model structures from~\cite{hovey2} and the language of exact model structures from~\cite{G4} and~\cite{J} are the same thing when the underlying category is abelian. 

Let $\Pinj$ denote the class of objects in $\C$ having the injective property with respect to the proper class $\mathcal{P}_{\otimes}$, the $\otimes$-pure short exact sequences in $\C$. We will call an object in $\Pinj$ a \emph{pure-injective}. We recall the following proposition from \cite{J}.

\begin{proposition}\cite[Corollary 5.9]{J}\label{prop-inj}
Let $(\C,\mathcal E)$ be an exact category of Grothendieck type and $\Inj$ the class of injective objects with respect to $\mathcal E$. Then $(\C, \Inj)$ is a functorially complete cotorsion pair in $\C$.
\end{proposition}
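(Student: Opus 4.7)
The plan is to first unpack what needs to be shown. That $(\C,\Inj)$ forms a cotorsion pair is essentially tautological: by definition $Y\in\Inj$ iff $\Ext^1_{\mathcal E}(X,Y)=0$ for every $X\in\C$, so both ${}^\perp \Inj = \C$ and $\C^\perp = \Inj$ hold without further work. The substantive claim is the functorial completeness of the pair. For the special $\C$-precover side completeness is trivial: one may always use $0\to 0\to X\to X\to 0$ (since $0\in\Inj$). Hence the content reduces to producing, functorially in $X\in\C$, a short exact sequence
\begin{equation*}
0\to X\to I(X)\to C(X)\to 0
\end{equation*}
with $I(X)\in\Inj$ (and then automatically $C(X)\in\C$).

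Next I would exploit that $\C$ is of Grothendieck type: there is a set $\mathcal{S}\subseteq\C$ with $\C=\Filt(\mathcal{S})$. I would verify $\mathcal{S}^\perp = \Inj$. The inclusion $\Inj\subseteq\mathcal{S}^\perp$ is immediate. For the converse I would apply Eklof's lemma for efficient exact categories, which rests on the hypothesis in Definition~\ref{def-efficient} that transfinite compositions of inflations exist and remain inflations: the class ${}^\perp Y$ is closed under filtrations for every $Y$. Hence if $Y\in\mathcal{S}^\perp$ then ${}^\perp Y\supseteq\Filt(\mathcal{S})=\C$, forcing $Y\in\Inj$.

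With this identification in hand, I would invoke Quillen's small object argument adapted to efficient exact categories. One chooses a set $\mathcal{I}$ of inflations such that an object $I$ belongs to $\mathcal{S}^\perp=\Inj$ if and only if it satisfies the right lifting property with respect to every morphism in $\mathcal{I}$; a natural choice is a set of representatives for every short exact sequence $0\to G\to L\to S\to 0$ with $S\in\mathcal{S}$ and $G$ drawn from a fixed generator of $\C$. For any $X\in\C$ one iteratively forms pushouts along coproducts of morphisms from $\mathcal{I}$ and takes transfinite composites. Because every object is small relative to the class of inflations and these transfinite composites are themselves inflations (Definition~\ref{def-efficient}), the construction terminates at some ordinal and yields a functorial inflation $X\hookrightarrow I(X)$ whose cokernel lies in $\Filt(\mathcal{S})=\C$ and with $I(X)\in\mathcal{S}^\perp=\Inj$, as required.

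The main obstacle is verifying that the small object argument really runs in this exact-category setting: pushouts of inflations along arbitrary morphisms must exist and remain inflations, and smallness must be strong enough to force termination of the transfinite construction. These are precisely the axioms built into an efficient, deconstructible exact category, and the full technical verification is Šťovíček's, carried out in \cite{J} where the present statement appears as Corollary~5.9.
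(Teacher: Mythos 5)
The paper offers no proof of this proposition at all --- it is quoted verbatim from \cite[Corollary 5.9]{J} --- and your sketch correctly reconstructs the strategy of \v{S}\v{t}ov\'{\i}\v{c}ek's argument there: the cotorsion-pair identities are tautological, one identifies $\Inj$ with $\mathcal{S}^\perp$ for a deconstructing set $\mathcal{S}$ via the exact-category Eklof lemma (using that transfinite compositions of inflations are inflations), and one then produces functorial special $\Inj$-preenvelopes with the transfinite small object argument, whose running hypotheses are exactly the efficiency axioms of Definition~\ref{def-efficient}. The one soft spot is your ``natural choice'' of the generating set of inflations and the unargued claim that the right lifting property against it characterizes $\mathcal{S}^\perp$: the correct set consists of inflations $K_S \rightarrowtail G^{(I_S)}$ obtained from deflations of coproducts of the generator onto each $S \in \mathcal{S}$, and proving that lifting against these really detects membership in $\mathcal{S}^\perp$ (absent enough projectives) is a genuine step, which is precisely where the technical work in \cite{J} lies, as you acknowledge.
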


From this proposition and Lemma \ref{Gtype}, we get that $(\C, \Pinj)$ is a  hereditary complete cotorsion pair in $\C$ with the pure exact structure. In particular every object in $\C$ can be purely embedded in a pure-injective object.

We now define the following classes in $\bfC(\C)$, which will turn out to be the fibrant and trivially fibrant objects in our model structure for the $\otimes$-pure derived category:
$$ \dg\Pinj=\{L\in\bfC(\C):\ L_n\in \Pinj \textrm{\ and\ each\ map\ } E\to L\ \textrm{is\ homotopic\ to}\ 0, \forall E\in \bfC_{\otimes\textrm{-}ac}(\C) \} $$ and 
$$\widetilde{\Pinj}=\{T\in \bfC_{\otimes\textrm{-}ac}(\C):\ Z_nT\in \Pinj\}. $$
One can check that $\widetilde{\Pinj}$ is the class of injective objects in the exact category $\bfC(\C)_{\otimes}$ of chain complexes with the pointwise pure-exact structure. Moreover, they are precisely the contractible complexes with pure-injective components. 
We want to apply \cite[Theorem 2.2]{hovey2} to the pairs $(\bfC_{\otimes\textrm{-}ac}(\C), \dg \Pinj)$ and $(\bfC(\C), \widetilde{\Pinj})$. So we have to show that these two pairs are complete cotorsion pairs in $\bfC(\C)_{\otimes}$.

First, we will prove that $\bfC_{\otimes\textrm{-}ac}(\C)$ is deconstructible. We will start with the following lemma.
\begin{lemma}\label{prev}
Let $X$ be $\otimes$-acyclic and $X'$ be a subcomplex of $X$. Assume that 
\begin{enumerate}
\item $X'$ is acyclic.
\item $Z_n X' \subseteq Z_n X$ is $\otimes$-pure, for each $n \in \mathbb{Z}$.
\end{enumerate}
Then $X'$ is $\otimes$-acyclic and $X'_n\subseteq X_n$ is $\otimes$-pure, for each $n\in \mathbb{Z}$.
\end{lemma}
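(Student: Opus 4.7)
The plan is to exploit the standard short exact sequences of cycles and boundaries and then diagram-chase after tensoring with an arbitrary object. For each $n\in\Z$, the acyclicity of $X'$ gives a short exact sequence $0 \to Z_nX' \to X'_n \to Z_{n-1}X' \to 0$ in $\C$, while the $\otimes$-acyclicity of $X$ gives a $\otimes$-pure short exact sequence $0 \to Z_nX \to X_n \to Z_{n-1}X \to 0$. Since $X' \hookrightarrow X$ is a morphism of complexes, these assemble into a commutative ladder, whose outer vertical arrows $Z_nX' \hookrightarrow Z_nX$ and $Z_{n-1}X' \hookrightarrow Z_{n-1}X$ are $\otimes$-pure monomorphisms by hypothesis (2); the middle vertical is the inclusion $X'_n \hookrightarrow X_n$, whose $\otimes$-purity is one of the two conclusions to establish.

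Now fix any $S\in\C$ and apply $-\otimes S$ to the whole ladder. The bottom row stays short exact since it was $\otimes$-pure, while the top row stays exact on the right by right-exactness of the tensor product, and the outer verticals remain monic by the purity assumption on cycles. The first goal is to show that the middle vertical $X'_n \otimes S \to X_n \otimes S$ is monic: take an element $a$ in its kernel; its image in $Z_{n-1}X\otimes S$ along the bottom is zero, so by commutativity its image in $Z_{n-1}X'\otimes S$ is killed by the monic right vertical and hence is zero; right-exactness of the top row then lifts $a$ to some $b\in Z_nX'\otimes S$; pushing $b$ into $X_n\otimes S$ through either route yields zero, whence the injectivity of the bottom-left map and of the left vertical successively forces $b=0$, and so $a=0$. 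Since $S$ was arbitrary, $X'_n\hookrightarrow X_n$ is $\otimes$-pure.

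For the $\otimes$-acyclicity of $X'$, it remains to see that the top row tensored with an arbitrary $S$ stays exact on the left, i.e.\ that $Z_nX' \otimes S \to X'_n\otimes S$ is monic. But by commutativity the composite $Z_nX'\otimes S \to X'_n\otimes S \to X_n\otimes S$ factors as $Z_nX'\otimes S \to Z_nX\otimes S \to X_n\otimes S$, and both factors are monic (the first by the $\otimes$-purity of the cycle inclusion, the second by the bottom row being $\otimes$-pure). Hence the composite is monic, which forces $Z_nX'\otimes S \to X'_n\otimes S$ to be monic. This gives pure-exactness of the top row for every $S$, so $X'$ is $\otimes$-acyclic.

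The main technical obstacle is making the element-style diagram chase rigorous in the abstract setting of a closed symmetric monoidal Grothendieck category, since $\C$ is not assumed to be a module category. This is standard and can be handled via the embedding of the relevant small abelian subcategory into $\Ab$, or equivalently by translating the chase into pullback/image-factorization language ($3\times 3$-lemma style). Beyond this bookkeeping the argument uses nothing deeper than hypothesis (2) on the cycles, the given $\otimes$-purity of the bottom row, and the right-exactness of $-\otimes S$.
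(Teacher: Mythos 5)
Your proposal is correct and follows essentially the same route as the paper's proof: the key step in both is that the composite $Z_nX'\subseteq X'_n\subseteq X_n$ agrees with the composite of the two given pure monomorphisms $Z_nX'\subseteq Z_nX\subseteq X_n$, which forces $Z_nX'\subseteq X'_n$ to be $\otimes$-pure, and the purity of the middle vertical is then a snake-lemma-type argument (the paper simply cites the snake lemma where you carry out the chase by hand after tensoring). The only differences are the order of the two conclusions and the explicit generalized-element bookkeeping, which is unproblematic since $\C$ is abelian.
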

\begin{proof}
By hypothesis, we have the commutative diagram below with the top row exact, the bottom row pure exact, and the outer vertical arrows pure monomorphisms. 
$$\begin{CD}
 0 @>>> Z_nX' @>>> X_n' @>>> Z_{n-1}X'  @>>> 0 \\
@. @VVV @VVV @VVV @. \\
 0 @>>> Z_nX @>>> X_n @>>> Z_{n-1}X  @>>> 0 \\
\end{CD}$$ Since the composite $Z_nX' \subseteq Z_nX \subseteq X_n$ is pure, we get that the composite $Z_nX' \subseteq X'_n \subseteq X_n$ is also pure. It follows immediately that $Z_nX' \subseteq X'_n$ is pure. So the top row is pure exact and now the snake lemma can be used to show that the middle vertical arrow is also a pure monomorphism. 
\end{proof}
\begin{proposition}\label{filtr}
There is a regular cardinal $\gamma$ such that every $\otimes$-acyclic complex $X$ has a $\bfC_{\otimes\textrm{-}ac}(\C)^{\leq \gamma}$-filtration. That is, $\bfC_{\otimes\textrm{-}ac}(\C)= \Filt (\bfC_{\otimes\textrm{-}ac}(\C)^{\leq \gamma})$, where $\bfC_{\otimes\textrm{-}ac}(\C)^{\leq \gamma}$ is the class of $\gamma$-presentable $\otimes$-acyclic complexes, and the monomorphisms in the filtration are with respect to the degreewise pure exact structure.
\end{proposition}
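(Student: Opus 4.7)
The plan is to reduce the filtration statement to a single key enlargement lemma and then assemble the filtration by transfinite induction.

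\emph{Choice of $\gamma$ and the key lemma.} I would first fix a regular cardinal $\gamma \rhd \lambda$ large enough that Theorem~\ref{sp} applies (so that every $\gamma$-presentable subobject of an object of $\C$ is contained in a $\gamma$-presentable $\lambda$-pure subobject), and such that $\gamma$ is uncountable so that countable unions of $\gamma$-presentables remain $\gamma$-presentable. The crux would then be the following key lemma: given $X \in \bfC_{\otimes\textrm{-}ac}(\C)$, every $\gamma$-presentable subcomplex $Y \subseteq X$ is contained in a $\gamma$-presentable subcomplex $Y' \subseteq X$ with $Y'$ acyclic and $Z_nY' \subseteq Z_nX$ degreewise $\otimes$-pure. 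By Lemma~\ref{prev}, such a $Y'$ is automatically $\otimes$-acyclic with degreewise $\otimes$-pure inclusion $Y'_n \subseteq X_n$.

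\emph{Proof of the key lemma.} Starting from $Y^0 = Y$, I would build a countable chain $Y^0 \subseteq Y^1 \subseteq \cdots$ of $\gamma$-presentable subcomplexes of $X$ so that at each step: (i) applying Theorem~\ref{sp} degreewise, enlarge $Z_nY^k \subseteq Z_nX$ to a $\gamma$-presentable $\lambda$-pure (hence $\otimes$-pure, by Remark~\ref{rem}) subobject $C_n^{(k)} \subseteq Z_nX$, and include $C_n^{(k)}$ inside $Y^{k+1}_n$; (ii) since $X$ is acyclic (pure exact sequences are in particular exact), the deflation $X_{n+1} \twoheadrightarrow Z_nX$ pulls back to a surjection onto $C_n^{(k)}$, from which one can extract a $\gamma$-presentable $P_n^{(k)} \subseteq X_{n+1}$ surjecting onto $C_n^{(k)}$, and include $P_n^{(k)} \subseteq Y^{k+1}_{n+1}$; (iii) close up under $\partial$ and finite sums inside $X$, keeping the resulting $Y^{k+1}$ $\gamma$-presentable. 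Set $Y' := \bigcup_k Y^k$. Then $Y'$ is $\gamma$-presentable as a countable union of $\gamma$-presentables. Since cycles commute with filtered colimits, $Z_n Y' = \bigcup_k Z_n Y^k = \bigcup_k C_n^{(k)}$, a directed union of $\lambda$-pure subobjects of $Z_nX$, hence itself $\lambda$-pure (pure monomorphisms being stable under directed colimits by Proposition~\ref{lp}). Moreover every cycle in $Y'_n$ lies in some $C_n^{(k)}$, whose elements have preimages in $P_n^{(k)} \subseteq Y^{k+1}_{n+1} \subseteq Y'_{n+1}$, so $Y'$ is acyclic. Lemma~\ref{prev} concludes.

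\emph{Assembling the filtration.} The filtration $\{X^\alpha\}_{\alpha < \mu}$ of $X$ would be built by transfinite induction, setting $X^0 = 0$, taking directed unions at limits (which preserves both acyclicity and degreewise $\otimes$-purity, since pure monomorphisms are closed under directed colimits), and at a successor stage picking any $\gamma$-presentable subcomplex $Y \subseteq X$ not contained in $X^\alpha$, applying the key lemma to obtain $Y' \supseteq Y$, and setting $X^{\alpha+1} := X^\alpha + Y'$. A short diagram chase, using that both $X^\alpha$ and $X^{\alpha+1}$ are $\otimes$-acyclic and degreewise $\otimes$-pure in $X$, shows that $X^\alpha \subseteq X^{\alpha+1}$ is a conflation in $\bfC(\C)_{\otimes}$ and that the quotient $X^{\alpha+1}/X^\alpha$ is $\gamma$-presentable and $\otimes$-acyclic. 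The main obstacle is the cardinal bookkeeping in the countable iteration of (i)--(iii), and the verification that the successive quotients $X^{\alpha+1}/X^\alpha$ lie in $\bfC_{\otimes\textrm{-}ac}(\C)^{\leq \gamma}$; the latter reduces, via Lemma~\ref{prev} applied to the conflation $X^\alpha \hookrightarrow X^{\alpha+1}$ in $\bfC(\C)_{\otimes}$, to the purity and $\gamma$-presentability already ensured by the key lemma.
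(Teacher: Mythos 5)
Your key enlargement lemma is the right reduction, and your back-and-forth proof of it is a genuinely different route from the paper's: the paper applies Theorem~\ref{sp} just once, in the locally $\lambda$-presentable category $\bfC(\C)$, to produce a single $\gamma$-presentable subcomplex $X'\subseteq X$ that is $\lambda$-pure \emph{as a subcomplex}, and then extracts everything from that one fact --- the purity of $Z_nX'\subseteq Z_nX$ via the adjunction $\Hom_{\C}(L,Z_nY)\cong\Hom_{\Ch(\C)}(S^n(L),Y)$ applied to $\lambda$-presentable $L$, and the acyclicity of $X'$ via two applications of the snake lemma --- before invoking Lemma~\ref{prev} exactly as you do. Your degreewise zigzag can be made to work but costs more cardinal bookkeeping (you must keep cycles and subobjects of $\gamma$-presentable complexes $\gamma$-presentable at every stage, which is not automatic for arbitrary $\gamma$). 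One citation needs repair: Proposition~\ref{lp} concerns $\lambda$-\emph{directed} colimits, and your chain $C_n^{(0)}\subseteq C_n^{(1)}\subseteq\cdots$ is only countable, so for $\lambda>\aleph_0$ it does not yield $\lambda$-purity of $\bigcup_k C_n^{(k)}$ in $Z_nX$. Fortunately Lemma~\ref{prev} only needs $\otimes$-purity, and that does survive directed unions, since $\otimes$ preserves colimits and directed colimits of monomorphisms are monomorphisms in a Grothendieck category; you should argue it that way.

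The genuine gap is in the assembly, at the successor step $X^{\alpha+1}:=X^{\alpha}+Y'$. A sum of two degreewise $\otimes$-pure subcomplexes need not be degreewise $\otimes$-pure: already in $\Z\operatorname{-Mod}$ the submodules $\Z(1,0)$ and $\Z(1,2)$ of $\Z^2$ are direct summands, hence pure, but their sum $\Z\oplus 2\Z$ is not pure in $\Z^2$. Likewise a sum of two acyclic subcomplexes of an acyclic complex need not be acyclic. So the ``short diagram chase'' you invoke has no hypotheses to chase from: neither the conflation property of $X^{\alpha}\subseteq X^{\alpha+1}$ nor the $\otimes$-acyclicity and presentability of $X^{\alpha+1}/X^{\alpha}$ is available for the object $X^{\alpha}+Y'$. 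The standard argument (which is what the paper is alluding to) runs instead through the quotient: since $X^{\alpha}\hookrightarrow X$ is a degreewise $\otimes$-pure inclusion of $\otimes$-acyclic complexes, tensoring the degreewise pure short exact sequence with any $S$ shows $X/X^{\alpha}$ is again $\otimes$-acyclic; one then applies the key lemma to a nonzero $\gamma$-presentable subcomplex of $X/X^{\alpha}$ and defines $X^{\alpha+1}$ as the preimage in $X$. With that construction $X^{\alpha+1}/X^{\alpha}$ lies in $\bfC_{\otimes\textrm{-}ac}(\C)^{\leq\gamma}$ by fiat, and $X^{\alpha+1}\hookrightarrow X$ is degreewise $\otimes$-pure because $\otimes$-pure monomorphisms compose and satisfy the obvious cancellation property. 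Replace your successor step by this quotient argument and the proof goes through.
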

\begin{proof}
The class $\bfC_{\otimes\textrm{-}ac}(\C)$ is closed under direct limits, so it suffices to show that there is a regular cardinal $\gamma$ satisfying that: given $A\subseteq X\neq 0$ where $X\in \bfC_{\otimes\textrm{-}ac}(\C)$ and $A$ is $\gamma$-presentable, there exists a $\gamma$-presentable $X'\neq 0$ such that $A\subseteq X'\subseteq X$, and $X'\in \bfC_{\otimes\textrm{-}ac}(\C)$, and $X'_n\subseteq X_n$ is $\otimes$-pure for each $n\in \mathbb{Z}$. Once we show this, a standard argument utilizing properties of the $\otimes$-purity will allow for the construction of the desired filtration of $X$.

Since $\C$ is Grothendieck, it is locally $\lambda$-presentable and so $\bfC(\C)$ is also locally $\lambda$-presentable.
Let $0\neq X\in \bfC_{\otimes\textrm{-}ac}(\C)$. By Theorem \ref{sp}, there is a regular cardinal $\gamma$ such that every $\gamma$-presentable subcomplex $A\subseteq X$ can be embedded in a $\gamma$-presentable subcomplex $X'\subseteq X$ which is a $\lambda$-pure embedding. 
According to Lemma \ref{prev} we just need to check that $X'$ is acyclic and that $Z_n X' \subseteq Z_n X$ is $\otimes$-pure for all $n \in \Z$. Now for any $\lambda$-presentable $L \in \C$ we have that $S^n(L)$ is a $\lambda$-presentable complex. Therefore, applying $\Hom_{\Ch(\C)}(S^n(L),-)$ to $0 \xrightarrow{} X' \xrightarrow{} X \xrightarrow{} X/X' \xrightarrow{} 0$, yields a short exact sequence 
$$0 \xrightarrow{} \Hom_{\Ch(\C)}(S^n(L),X') \xrightarrow{} \Hom_{\Ch(\C)}(S^n(L),X) \xrightarrow{} \Hom_{\Ch(\C)}(S^n(L),X/X') \xrightarrow{} 0.$$
But under the canonical isomorphism $\Hom_{\C}(L,Z_n Y)\cong \Hom_{\Ch(\C)}(S^n(L),Y)$
this gives us a short exact sequence $0 \xrightarrow{} \Hom_{\C}(L,Z_nX') \xrightarrow{} \Hom_{\C}(L,Z_nX) \xrightarrow{} \Hom_{\C}(L,Z_n(X/X')) \xrightarrow{} 0$. Since $\C$ is locally $\lambda$-presentable, there it has a generating set consisting of $\lambda$-presented objects and so it follows that $0 \xrightarrow{} Z_nX' \xrightarrow{} Z_nX \xrightarrow{} Z_n(X/X') \xrightarrow{} 0$ is a short exact sequence. In fact we have just shown that this is a $\lambda$-pure exact sequence in $\C$. So it is also $\otimes$-pure exact. It now only remains to show that $X'$ is itself exact. For this, we apply the snake lemma to 
$$\begin{CD}
 0 @>>> Z_nX' @>>> Z_nX @>>> Z_n(X/X')  @>>> 0 \\
@. @VVV @VVV @VVV @. \\
 0 @>>> X'_n @>>> X_n @>>> (X/X')_n  @>>> 0 \\
\end{CD}$$ 
to conclude we have a short exact sequence 
$0 \xrightarrow{} B_{n-1}X' \xrightarrow{} B_{n-1}X \xrightarrow{} B_{n-1}(X/X') \xrightarrow{} 0$ for all $n$.
We then turn around and apply the snake lemma to 
$$\begin{CD}
 0 @>>> B_nX' @>>> B_nX @>>> B_n(X/X')  @>>> 0 \\
@. @VVV @VVV @VVV @. \\
 0 @>>> Z_nX' @>>> Z_nX @>>> Z_n(X/X')  @>>> 0 \\
\end{CD}$$ 
and use that $B_nX = Z_nX$ to conclude that $B_nX = Z_nX$ (and $B_n(X/X') = Z_n(X/X')$). 
\end{proof}

\begin{corollary}\label{cor-DG-pure-injective cot pair}
The pair $(\bfC_{\otimes\textrm{-}ac}(\C),\dg \Pinj)$ is a complete (and hereditary) cotorsion pair in  $\bfC(\C)_{\otimes}$.

\end{corollary}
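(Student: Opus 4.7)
The plan is to derive this statement as a direct application of Š\v{t}ov\'{\i}\v{c}ek's Lemma~\ref{Je} and then unpack the right orthogonal as $\dg\Pinj$.

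\medskip\par\noindent
\textbf{Step 1 (apply Lemma \ref{Je}).}
I first verify the three hypotheses of Lemma~\ref{Je} for the exact category $(\C,\mathcal{P}_\otimes)$. Lemma~\ref{Gtype} shows $(\C,\mathcal{P}_\otimes)$ is of Grothendieck type. Proposition~\ref{filtr} shows that $\bfC_{\otimes\textrm{-}ac}(\C)$ is deconstructible in $\bfC(\C)_\otimes$. Finally, Proposition~\ref{prop-inj} applied to $(\C,\mathcal{P}_\otimes)$ yields the complete hereditary cotorsion pair $(\C,\Pinj)$. Feeding these into Lemma~\ref{Je} with the choice $\mcF=\C$ (so that $\tilde{\mcF}=\bfC_{\otimes\textrm{-}ac}(\C)$), I conclude that $(\bfC_{\otimes\textrm{-}ac}(\C),\bfC_{\otimes\textrm{-}ac}(\C)^\perp)$ is a complete hereditary cotorsion pair in $\bfC(\C)_\otimes$.

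\medskip\par\noindent
\textbf{Step 2 (identify the right half).} It remains to show $\bfC_{\otimes\textrm{-}ac}(\C)^\perp=\dg\Pinj$. For $\supseteq$, take $L\in\dg\Pinj$ and any conflation $0\to L\to M\to E\to 0$ in $\bfC(\C)_\otimes$ with $E\in\bfC_{\otimes\textrm{-}ac}(\C)$. The sequence is degreewise pure exact and each $L_n$ is pure-injective, so it splits in each degree. The obstruction to upgrading this graded splitting to a chain splitting is a homotopy class of chain maps $E\to L$, which vanishes by the second defining property of $\dg\Pinj$. For $\subseteq$, take $L\in\bfC_{\otimes\textrm{-}ac}(\C)^\perp$. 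To see each $L_n\in\Pinj$, start from a pure exact $0\to N\to N'\to N''\to 0$ in $\C$ and form the disk complexes $D^n(-)$; the induced sequence $0\to D^n(N)\to D^n(N')\to D^n(N'')\to 0$ is degreewise pure exact and each term lies in $\bfC_{\otimes\textrm{-}ac}(\C)$ (its cycles are either $0$ or the argument itself, always purely embedded). The Ext long exact sequence applied to $\Hom_{\bfC(\C)}(-,L)$ together with the natural adjunction $\Hom_{\bfC(\C)}(D^n(X),L)\cong\Hom_\C(X,L_n)$ shows $0\to\Hom_\C(N'',L_n)\to\Hom_\C(N',L_n)\to\Hom_\C(N,L_n)\to 0$ is exact, so $L_n$ is pure-injective. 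For the homotopy condition, given any chain map $f:E\to L$ with $E\in\bfC_{\otimes\textrm{-}ac}(\C)$, form the mapping cone conflation $0\to L\to\mathrm{cone}(f)\to E[1]\to 0$. It is degreewise split, hence a fortiori degreewise pure, and $E[1]\in\bfC_{\otimes\textrm{-}ac}(\C)$; the cotorsion vanishing forces it to split, i.e. $f$ is null-homotopic.

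\medskip\par\noindent
\textbf{Main obstacle.} The machinery of Lemma~\ref{Je} handles Step~1 cleanly; the genuine content sits in Step~2, specifically in reconciling the categorical Ext-vanishing with the two-part combinatorial description of $\dg\Pinj$. The disk-complex calculation provides componentwise pure-injectivity, and once that is in place, the $\otimes$-pure exact structure on $\bfC(\C)_\otimes$ effectively collapses, on sequences ending in $L$, to the degreewise split exact structure; this is what lets the classical mapping-cone / homotopy argument transfer verbatim to the $\otimes$-pure setting.
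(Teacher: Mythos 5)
Your proof is correct and follows essentially the same route as the paper: apply Lemma~\ref{Je} (via Lemma~\ref{Gtype}, Proposition~\ref{filtr}, and Proposition~\ref{prop-inj}) to get the cotorsion pair $(\bfC_{\otimes\textrm{-}ac}(\C),\bfC_{\otimes\textrm{-}ac}(\C)^{\perp})$, then identify the right orthogonal with $\dg\Pinj$. The only cosmetic difference is in extracting componentwise pure-injectivity --- you use disk complexes and the adjunction $\Hom_{\bfC(\C)}(D^n(X),L)\cong\Hom_{\C}(X,L_n)$ where the paper pushes out along $X_i\to X_{i-1}$ to build the same kind of extension --- and you spell out the mapping-cone argument for the homotopy condition, which the paper leaves implicit.
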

\begin{proof}
As noted after Proposition~\ref{prop-inj}, we have that $(\C, \Pinj)$ is a complete hereditary cotorsion pair in $\C$ with the pure exact structure.
So by Lemma \ref{Je} and Proposition \ref{filtr}, we infer that $(\bfC_{\otimes\textrm{-}ac}(\C), \bfC_{\otimes\textrm{-}ac}(\C)^{\perp})$ is a complete (and hereditary) cotorsion pair in $\bfC(\C)_{\otimes}$. So it only remains to show that $\bfC_{\otimes\textrm{-}ac}(\C)^{\perp}$ coincides with the class $\dg \Pinj$. By definition of $\dg \Pinj$, it is clear $\dg \Pinj \subseteq \bfC_{\otimes\textrm{-}ac}(\C)^{\perp}$. Now let $X \in \bfC_{\otimes\textrm{-}ac}(\C)^{\perp}$. It is enough to show that each $X_i$ is pure-injective. Let $0 \rightarrow X_i \rightarrow A \rightarrow B \rightarrow 0$ be pure-exact sequence in $\C$. Then we get a short exact sequence of complexes $0 \rightarrow X \rightarrow \overline{A} \rightarrow D^i(B) \rightarrow 0$ in $\bfC(\C)$ by taking the pushout of $X_i\to X_{i-1}$ and $X_i\to A$ and where $\overline{A}_i=A$. Since pure-monomorphisms are closed by forming pushouts the sequence is degreewise pure-exact. By assumption the sequence splits and so, in particular, it splits on each degree. Hence $X_i$ is pure-injective.
\end{proof}

\begin{proposition}
The pair $(\bfC(\C), \widetilde{\Pinj})$ is a complete (and hereditary) cotorsion pair in $\bfC(\C)_{\otimes}$. Moreover, $\widetilde{\Pinj}=\dg \Pinj\cap \bfC_{\otimes\textrm{-}ac}(\C)$.
\end{proposition}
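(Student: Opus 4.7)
The claim contains two assertions, which I would establish in order: the identification $\widetilde{\Pinj} = \dg\Pinj \cap \bfC_{\otimes\textrm{-}ac}(\C)$ first, since it makes the cotorsion pair transparent, and then the cotorsion pair itself.

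For the identification, if $T \in \widetilde{\Pinj}$ then each $\otimes$-pure exact sequence $0 \to Z_nT \to T_n \to Z_{n-1}T \to 0$ splits because $Z_nT \in \Pinj$. This exhibits $T_n$ as a direct sum of two pure-injectives (hence pure-injective), and the componentwise splittings assemble into a contracting homotopy on $T$; in particular every chain map from an $\otimes$-acyclic complex into $T$ is null-homotopic, giving $T \in \dg\Pinj$. Conversely, if $T \in \dg\Pinj \cap \bfC_{\otimes\textrm{-}ac}(\C)$, then $\mathrm{id}_T$ is a chain map from an $\otimes$-acyclic complex to $T \in \dg\Pinj$, hence null-homotopic. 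So $T$ is contractible, and the contracting homotopy exhibits $Z_nT$ as a direct summand of $T_{n+1} \in \Pinj$; since pure-injectives are closed under direct summands, we conclude $Z_nT \in \Pinj$.

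For the cotorsion pair, the text preceding the proposition has already observed that $\widetilde{\Pinj}$ coincides with the class of injective objects in $\bfC(\C)_{\otimes}$. This immediately gives orthogonality $\Ext^1_{\bfC(\C)_{\otimes}}(\bfC(\C), \widetilde{\Pinj}) = 0$, together with both closure equalities ${}^{\perp}\widetilde{\Pinj} = \bfC(\C)$ (trivially) and $\bfC(\C)^{\perp} = \widetilde{\Pinj}$ (any right-orthogonal object is injective). Heredity is immediate because the left class equals the ambient category. For completeness, the special $\bfC(\C)$-precover of any $X$ is the trivial SES $0 \to 0 \to X \to X \to 0$, so only the special $\widetilde{\Pinj}$-preenvelope requires work. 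Using completeness of $(\C, \Pinj)$, I pick for each $n$ a pure monomorphism $\alpha_n: X_n \hookrightarrow I_n$ with $I_n \in \Pinj$; by the adjunction $\Hom_{\bfC(\C)}(Y, D^{n+1}(I)) \cong \Hom_{\C}(Y_n, I)$ these promote to chain maps $X \to D^{n+1}(I_n)$, which assemble into $\Phi: X \to J := \prod_n D^{n+1}(I_n)$. In degree $m$ one finds $J_m = I_{m-1} \oplus I_m$ (pure-injective), $Z_m J = I_m \in \Pinj$, and $\Phi_m(x) = (\alpha_{m-1}(d_m x), \alpha_m(x))$, which is pure-monic because its second coordinate is the pure mono $\alpha_m$. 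Hence $J \in \widetilde{\Pinj}$ by the identification, and the SES $0 \to X \xrightarrow{\Phi} J \to J/X \to 0$ is the required preenvelope in $\bfC(\C)_{\otimes}$.

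The main technical obstacle is verifying that the candidate complex $J$ really lies in $\widetilde{\Pinj}$ and that $\Phi$ is a pure mono in each degree. This reduces to the standard closure properties of $\Pinj$ under products, direct summands, and extensions, together with the degreewise formulas above. Once these routine verifications are assembled, all the hypotheses of Hovey's correspondence that the paper needs are in place.
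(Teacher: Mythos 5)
Your proof is correct, but it reaches completeness by a genuinely different route than the paper. The paper verifies that $\bfC(\C)_{\otimes}$ is itself an exact category of Grothendieck type (deconstructibility in itself via Theorem~\ref{sp} applied to the locally presentable category $\bfC(\C)$, using that $\lambda$-pure subcomplexes are degreewise $\otimes$-pure) and then invokes \v{S}\v{t}ov\'{\i}\v{c}ek's general result (Proposition~\ref{prop-inj}) to conclude that $(\bfC(\C),\Inj)$ is a functorially complete cotorsion pair, finally identifying $\Inj=\widetilde{\Pinj}$. You instead take the degreewise completeness of $(\C,\Pinj)$ as input and build the special $\widetilde{\Pinj}$-preenvelope by hand as $X\to\prod_n D^{n+1}(I_n)$ with $X_n\to I_n$ pure monomorphisms into pure-injectives; your degreewise formulas for $J_m$, $Z_mJ$ and $\Phi_m$ are all correct, and the observation that $\Phi_m$ is $\otimes$-pure because its second coordinate is $\alpha_m$ is sound (tensoring the composite with any $S$ shows $\Phi_m\otimes S$ is monic). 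The trade-off: your construction is more explicit and elementary, avoids re-verifying deconstructibility at the level of complexes, and is even functorial if the embeddings $X_n\to I_n$ are chosen functorially (which they can be, since $(\C,\Pinj)$ is functorially complete); the paper's argument is less constructive but uniform with the rest of the text and yields functorial completeness for free from the cited machinery. Both arguments lean equally on the earlier unproved observation that $\widetilde{\Pinj}$ is exactly the class of injective objects of $\bfC(\C)_{\otimes}$, so there is no circularity issue relative to the paper. Your treatment of the identification $\widetilde{\Pinj}=\dg\Pinj\cap\bfC_{\otimes\textrm{-}ac}(\C)$ coincides with the paper's (contractibility from null-homotopy of the identity, retract argument for the cycles) and is if anything slightly more detailed in the forward direction.
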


\begin{proof}
It can be easily observed that $\bfC(\C)_{\otimes}$, the exact category of chain complexes with the degreewise pure-exact structure, is of Gro\-then\-dieck type. Indeed, $\bfC(\C)$ is a Grothendieck category and any $\lambda$-pure subobject gives us a degreewise $\lambda$-pure monomorphism. So it is a degreewise $\otimes$-pure monomorphism as well.  Note that, colimits in $\bfC(\C)$ are computed pointwise.  So we again can apply Proposition~\ref{sp} to argue that $\bfC(\C)_{\otimes}$ is deconstructible in itself. Then by Proposition~\ref{prop-inj} we get that $(\bfC(\C), \Inj)$ is a  complete cotorsion pair in $\bfC(\C)_{\otimes}$. But here $\Inj=\widetilde{\Pinj}$.

\medskip\par\noindent
Since $\widetilde{\Pinj}$ consists of contractible complexes of pure-injectives, $\widetilde{\Pinj} \subseteq \dg \Pinj\cap \bfC_{\otimes\textrm{-}ac}(\C)$.  For the converse, let $X \in \dg \Pinj\cap \bfC_{\otimes\textrm{-}ac}(\C)$. By assumption, the identity map $X\to X$ is homotopic to zero, so $X$ is a contractible complex of pure-injectives. So $X \in \widetilde{\Pinj}$.
\end{proof}

Now note that $\bfC_{\otimes\textrm{-}ac}(\C)$ is thick in the exact category $\bfC(\C)_{\otimes}$. So we have now proved {\bf Theorem A} of the introduction.

\begin{remark}
If $\lambda'\geq \lambda$ are regular cardinals, then any $\lambda$-presentable object is also $\lambda'$-presentable. So by Definition~\ref{def-pure} we see that $\lambda'$-pure implies $\lambda$-pure. (Warning! There is a misprint on the bottom of page~85 of~\cite{AR}.) We also see from Proposition~\ref{lp} that if $\C$ is a locally $\lambda$-presentable additive category, then the $\lambda$-pure exact structure is the smallest exact structure on $\C$ that is closed under $\lambda$-directed colimits. We conclude that if $\C$ is a closed symmetric monoidal Grothendieck category and locally $\lambda$-presentable, then we have containments of exact structures: $\mathcal{P}_{\lambda'} \subseteq \mathcal{P}_{\lambda} \subseteq \mathcal{P}_{\otimes}$ whenever $\lambda' \geq \lambda$. 
\end{remark}

\section{Relationship  between the two pure derived categories}

Suppose that $\C$ is a closed symmetric monoidal Grothendieck category. In this section we get an adjunction between the two derived categories obtained from the proper class $\mathcal{P}$ of the $\lambda$-pure short exact sequences and the proper class $\mathcal{P}_{\otimes}$ of the $\otimes$-pure short exact sequences. Recall that  $\mathcal P \subseteq \mathcal{P}_{\otimes}$. By \cite{G}, we have  the $\lambda$-pure derived category $\mathcal{D}_{\lambda{\mbox{-}}{\rm pur}}(\C)$ and the $\lambda$-pure projective model structure on $\bfC(\C)_{\mathcal{P}}$ whose trivial objects are the $\lambda$-pure exact complexes. This model structure corresponds to Hovey pairs in $\bfC(\C)_{\mathcal{P}}$ that we denote by $(\dg \Pproj, \bfC_{\lambda\textrm{-}ac}(\C))$ and $(\widetilde{\Proj}, \bfC(\C))$. In particular, $\bfC_{\lambda\textrm{-}ac}(\C)$ denotes the class of $\lambda$-pure exact complexes.

 From the previous section, we have the Hovey pairs $(\bfC_{\otimes\textrm{-}ac}(\C), \dg \Pinj)$ and $(\bfC(\C), \widetilde{\Pinj})$ on $\bfC(\C)_{\otimes}$. So the derived category $\mathcal{D}_{\otimes{\mbox{-}}{\rm pur}}(\C)$ has an injective model structure whose trivial objects are the $\otimes$-acyclic complexes, while $\mathcal{D}_{\lambda{\mbox{-}}{\rm pur}}(\C)$ has a projective model structure whose trivial objects are the $\lambda$-pure acyclic complexes. 
\begin{definition}
Suppose $\C$ and $\mathcal{D}$ are model categories.
\begin{enumerate}
\item We call a functor $F: \C \rightarrow \mathcal{D}$ a \emph{left Quillen functor} if $F$ is a left adjoint and preserves cofibrations and trivial cofibrations.
\item We call a functor $U: \mathcal{D} \rightarrow \C$ a \emph{right Quillen functor} if $U$ is a right adjoint and preserves fibrations and trivial fibrations.
\item Suppose $(F, U, \varphi)$ is an adjunction from $\C$ to $\mathcal{D}$. That is, $F$ is a functor $\C \rightarrow \mathcal{D}$, $U$ is a functor $\mathcal{D} \rightarrow \C$, and $\varphi$ is a natural isomorphism $\Hom(FA,B) \rightarrow \Hom(A,UB)$ expressing $U$ as a right adjoint of $F$. We call $(F, U, \varphi)$ a \emph{Quillen adjunction} if $F$ is a left Quillen functor.
\end{enumerate}
\end{definition}
\begin{lemma}\label{aQ1}\cite[Lemma 1.3.4]{hovey2}
Suppose $(F,U,\varphi): \C \rightarrow \mathcal{D}$ is an adjunction, and $\C$ and $\mathcal{D}$ are
model categories. Then $(F,U,\varphi)$ is a Quillen adjunction if and only if $U$ is a right
Quillen functor.
\end{lemma}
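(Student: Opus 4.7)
The plan is to deduce the equivalence from the standard lifting-property characterization of the four classes of maps in a model category, combined with the naturality of the adjunction isomorphism $\varphi$. Recall that a map in any model category is a fibration if and only if it has the right lifting property against every trivial cofibration, and a trivial fibration if and only if it has the right lifting property against every cofibration; dually, a (trivial) cofibration is characterized by the left lifting property against (trivial) fibrations. Thus being a left Quillen functor and being a right Quillen functor are each expressible purely in terms of lifting problems.

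The key step is to translate lifting problems across the adjunction. Given a commutative square
$$\xymatrix{FA \ar[r]^h \ar[d]_{Ff} & X \ar[d]^g \\ FA' \ar[r]_k & Y}$$
in $\mathcal{D}$, applying $\varphi$ to $h$ and $k$ produces a commutative square in $\C$ with legs $f\colon A\to A'$ and $Ug\colon UX\to UY$, and by naturality of $\varphi$ a diagonal lift $FA'\to X$ for the first square corresponds bijectively under $\varphi$ to a diagonal lift $A'\to UX$ for the second. Consequently, $Ff$ has the left lifting property against $g$ in $\mathcal{D}$ if and only if $f$ has the left lifting property against $Ug$ in $\C$.

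With this translation in hand, both directions are mechanical. For the forward direction, assume $F$ is a left Quillen functor and let $g$ be a fibration in $\mathcal{D}$. Given any trivial cofibration $f$ in $\C$, the map $Ff$ is a trivial cofibration in $\mathcal{D}$, hence has the left lifting property against $g$; by the translation, $f$ has the left lifting property against $Ug$, so $Ug$ is a fibration. Replacing \emph{trivial cofibration} by \emph{cofibration} and \emph{fibration} by \emph{trivial fibration} throughout shows that $U$ also preserves trivial fibrations, so $U$ is a right Quillen functor. The reverse implication is entirely symmetric: assuming $U$ preserves fibrations and trivial fibrations, run the same argument with the roles reversed, testing $Ff$ for a (trivial) cofibration $f$ in $\C$ against an arbitrary (trivial) fibration $g$ in $\mathcal{D}$ via the translation above. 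The only point of real substance is the diagonal-lift correspondence under $\varphi$, which is a routine diagram chase using naturality of the unit and counit; I do not foresee any serious obstacle.
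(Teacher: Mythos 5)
Your argument is correct and is essentially the standard proof of this result: the paper gives no proof of its own but simply cites \cite[Lemma 1.3.4]{hovey2}, and Hovey's proof there is exactly your combination of the lifting-property characterizations of the four distinguished classes of maps with the adjoint correspondence of lifting problems (his Lemma 1.1.10). Nothing is missing.
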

\begin{definition}
Suppose $\C$ and $\mathcal{D}$ are model categories.
\begin{enumerate}
\item If $F : \C \rightarrow \mathcal{D}$ is a left Quillen functor, define the total left derived functor
$LF: \Ho \C \rightarrow \Ho \mathcal{D}$ to be the composite $$\xymatrix{\Ho \C \ar[r]^{\HoQ} & \Ho \C_c \ar[r]^{\HoF} & \Ho \mathcal{D}}.$$
Given a natural transformation $\tau : F \rightarrow F' $ of left Quillen functors, define the total derived natural transformation $L_{\tau}$ to be $ \Ho \tau \circ \Ho Q$, so that $(L \tau)_X = \tau_{QX}$.
\item  If $U : \mathcal{D} \rightarrow \C$ is a right Quillen functor, define the total right derived functor $RU: \Ho \mathcal{D} \rightarrow \Ho \C$ of $U$ to be the composite $$\xymatrix{ \Ho \mathcal{D} \ar[r]^{\HoR} &  \Ho \mathcal{D}_f \ar[r]^{HoU} & \Ho \C}.$$
Given a natural transformation $\tau : U \rightarrow U'$ of right Quillen functors, define the total derived natural transformation $R \tau$ to be $\Ho \tau \circ \Ho R$, so that $R\tau_X =\tau_{RX}X$.
\end{enumerate}
\end{definition}
\begin{lemma}\label{aQ2}\cite[Lemma 1.3.10]{hovey2}
Suppose $\C$ and $\mathcal{D}$ are model categories and $(F,U,\varphi): \C \rightarrow  \mathcal{D}$
is a Quillen adjunction. Then $LF$ and $RU$ are part of an adjunction $L(F,U,\varphi) =
(LF,RU,R\varphi)$, which we call the derived adjunction.
\end{lemma}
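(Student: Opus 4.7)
The plan is to carry out the standard two-step argument: first verify that $LF$ and $RU$ are well-defined as functors between homotopy categories, and then descend the point-set adjunction isomorphism $\varphi$ through the cofibrant and fibrant replacement functors.

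For the first step, I would invoke Ken Brown's Lemma, which ensures that a left Quillen functor sends weak equivalences between cofibrant objects to weak equivalences, and dually for right Quillen functors between fibrant objects. Thus the restriction of $F$ to the full subcategory $\C_c$ of cofibrant objects descends to a functor $\Ho F : \Ho \C_c \to \Ho \mathcal{D}$, and composing with the equivalence $\Ho Q$ yields $LF$. A symmetric argument produces $RU = \Ho U \circ \Ho R$. Note that by Lemma~\ref{aQ1} we already know $U$ is a right Quillen functor, so this symmetry is available.

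Next, given $A \in \Ho \C$ and $B \in \Ho \mathcal{D}$, I would replace $A$ by a cofibrant replacement $QA$ and $B$ by a fibrant replacement $RB$, so that up to isomorphism $LF(A) = F(QA)$ and $RU(B) = U(RB)$. Since $F(QA)$ is cofibrant and $RB$ is fibrant, the hom-set $\Hom_{\Ho \mathcal{D}}(F(QA), RB)$ equals the set of homotopy classes of maps $F(QA) \to RB$ in $\mathcal{D}$, and similarly for $\Hom_{\Ho \C}(QA, U(RB))$. The point-set adjunction isomorphism $\varphi$ furnishes the candidate bijection
\[
\Hom_{\mathcal{D}}(F(QA), RB) \cong \Hom_{\C}(QA, U(RB)),
\]
and what remains is to verify that it descends to homotopy classes.

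This compatibility is the main obstacle and the technical core. The key fact is that if $C$ is a cylinder object for $QA$ in $\C$, then $F(C)$ is a cylinder object for $F(QA)$ in $\mathcal{D}$: as a left adjoint $F$ preserves the pushout defining $C$, and as a left Quillen functor it preserves the relevant cofibrations and weak equivalences between cofibrant objects. Hence left homotopies of maps $F(QA) \to RB$ correspond under $\varphi$ to left homotopies of maps $QA \to U(RB)$, and the dual argument with path objects and $U$ handles the right-homotopy version. Naturality of the derived bijection in $A$ and $B$ then follows from naturality of $\varphi$ together with the functoriality of $Q$ and $R$, yielding the derived adjunction denoted $R\varphi$.
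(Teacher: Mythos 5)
The paper gives no proof of this statement; it is quoted verbatim as Lemma 1.3.10 of Hovey's book, so the only comparison available is with the standard argument given there. Your proposal correctly reconstructs that argument — Ken Brown's lemma to make $LF$ and $RU$ well defined, identification of homotopy-category hom-sets with homotopy classes for cofibrant sources and fibrant targets, and descent of $\varphi$ to homotopy classes via the observation that $F$ carries a cylinder object of the cofibrant $QA$ to a cylinder object of $F(QA)$ — so it matches the cited proof in all essentials.
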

\begin{proposition}\label{prop-adjunction}
$\id: \bfC(\C)_{\mathcal{P}} \rightarrow \bfC(C)_{\otimes}$ is a left Quillen functor. So there is a Quillen adjunction between $\mathcal{D}_{\lambda{\mbox{-}}{\rm pur}}(\C)$ and $\mathcal{D}_{\otimes{\mbox{-}}{\rm pur}}(\C)$
\end{proposition}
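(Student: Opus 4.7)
The strategy is to check directly that $\id$ preserves cofibrations and trivial cofibrations; the Quillen adjunction on homotopy categories then follows from Lemma~\ref{aQ2}. Since both model structures live on the same underlying abelian category $\bfC(\C)$, the identity is trivially a left adjoint to itself, with the natural isomorphism $\Hom(X,Y)\to \Hom(X,Y)$ being the identity.

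Using Hovey's correspondence, I unpack the (trivial) cofibrations on both sides. On $\bfC(\C)_{\mathcal{P}}$, the Hovey pairs $(\dg\Pproj,\bfC_{\lambda\textrm{-}ac}(\C))$ and $(\widetilde{\Proj},\bfC(\C))$ yield: cofibrations are the degreewise $\lambda$-pure monomorphisms with cokernel in $\dg\Pproj$, and trivial cofibrations are the degreewise $\lambda$-pure monomorphisms with cokernel in $\widetilde{\Proj}$. On $\bfC(\C)_{\otimes}$, the Hovey pairs $(\bfC_{\otimes\textrm{-}ac}(\C),\dg\Pinj)$ and $(\bfC(\C),\widetilde{\Pinj})$ yield: cofibrations are \emph{all} degreewise $\otimes$-pure monomorphisms (because here the left class is all of $\bfC(\C)$), and trivial cofibrations are the degreewise $\otimes$-pure monomorphisms with cokernel in $\bfC_{\otimes\textrm{-}ac}(\C)$.

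Now let $f\colon A\to B$ be a cofibration in $\bfC(\C)_{\mathcal{P}}$. By Remark~\ref{rem} every $\lambda$-pure monomorphism in $\C$ is $\otimes$-pure, so $f$ is degreewise $\otimes$-pure and therefore a cofibration in $\bfC(\C)_{\otimes}$. If in addition $f$ is trivial, its cokernel $C$ lies in $\widetilde{\Proj}$, so in particular $C$ is $\lambda$-pure acyclic, meaning that for each $n$ the sequence $0\to Z_nC\to C_n\to Z_{n-1}C\to 0$ is $\lambda$-pure in $\C$. Applying Remark~\ref{rem} to each of these sequences, they are $\otimes$-pure, so $C\in \bfC_{\otimes\textrm{-}ac}(\C)$ and $f$ is a trivial cofibration in $\bfC(\C)_{\otimes}$. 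This proves $\id$ is a left Quillen functor, and Lemma~\ref{aQ2} then produces the desired derived Quillen adjunction between $\mathcal{D}_{\lambda{\mbox{-}}{\rm pur}}(\C)$ and $\mathcal{D}_{\otimes{\mbox{-}}{\rm pur}}(\C)$. The argument is essentially bookkeeping; the only content beyond identifying the (trivial) cofibrations via Hovey's theorem is the passage from $\lambda$-pure to $\otimes$-pure, which is precisely the containment $\mathcal{P}\subseteq\mathcal{P}_{\otimes}$ of Remark~\ref{rem}.
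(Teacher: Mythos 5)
Your proof is correct and follows essentially the same route as the paper: unpack the (trivial) cofibrations from the Hovey pairs on each side, use the containment $\mathcal{P}\subseteq\mathcal{P}_{\otimes}$ of Remark~\ref{rem} to see they are preserved by $\id$, and invoke Lemma~\ref{aQ2}. The only (immaterial) difference is at the trivial cofibrations: the paper observes that a cokernel in $\widetilde{\Proj}$ is contractible, hence $\otimes$-acyclic, whereas you check directly that its cycle sequences are $\lambda$-pure and hence $\otimes$-pure; both are fine.
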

\begin{proof}
Clearly $\id$ is a left adjoint functor of $\id:\bfC(\C)_{\otimes} \rightarrow \bfC(C)_{\mathcal{P}}$. Also, $\id$ preserves cofibrations and trivial cofibrations. Indeed, a cofibration $f$ in $\bfC(\C)_{\mathcal{P}}$ is a degreewise $\lambda$-pure monomorphism with cokernel in $\dg \Pproj$. Such an $f$ is a cofibration in $\bfC(\C)_{\otimes}$ as well since here the cofibrations are the degreewise $\otimes$-pure monomorphisms. Also, any trivial cofibration $f$ in $\bfC(\C)_{\mathcal{P}}$ is a degreewise $\lambda$-pure monomorphism with cokernel in $\widetilde{\Proj}$, in particular, contractible. So it is a trivial cofibration in $\bfC(\C)_{\otimes}$.

By Lemma \ref{aQ1}, $\id:\bfC(\C)_{\otimes} \rightarrow \bfC(C)_{\mathcal{P}} $ is a right Quillen functor.

From Lemma \ref{aQ2}, the  total left derived functor $L(\id): \mathcal{D}_{\lambda{\mbox{-}}{\rm pur}}(\C) \rightarrow \mathcal{D}_{\otimes{\mbox{-}}{\rm pur}}(\C)$ and the total right derived functor $R(\id):\mathcal{D}_{\otimes{\mbox{-}}{\rm pur}}(\C) \rightarrow \mathcal{D}_{\lambda{\mbox{-}}{\rm pur}}(\C) $ gives us an adjunction $(L(\id),R(\id))$. By definition, $L(\id)(X)$ is its cofibrant replacement in $\bfC(\C)_{\mathcal{P}}$, that is, $L(\id)(X) \in \dg \Pproj$. Dually, $R(\id)(X)$ is its fibrant replacement in $\bfC(\C)_{\otimes}$, so $R(\id)(X) \in \dg \Pinj$.
\end{proof}

\section{The pure derived category of flat sheaves via model structures}
Let $\mathcal{A}$ be a locally finitely presentable additive category. We wish to prove the remaining two theorems from the introduction.  
We start by recalling the following representation theorem due to Crawley-Boevey (see also \cite[Chapter 16]{Prest2} for a nice exposition).
\begin{theorem}[Crawley-Boevey]
Every locally finitely presented additive category $\mathcal A$ is equivalent to the full subcategory $\mathrm{Flat}(A)$ of the category $\operatorname{Mod-}\!\! A$ of unitary right $A$-modules consisting of flat right $A$-modules where $A$ is the functor ring of $\mathcal A$ (that is, a ring with enough idempotents). This equivalence gives a 1-1 correspondence between pure exact sequences in $\mathcal A$ and exact sequences in $\mathrm{Flat}(A)$.
\end{theorem}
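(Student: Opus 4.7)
The strategy is classical: build the functor ring from the skeletally small category $\mathrm{fp}(\mathcal{A})$ of finitely presented objects, realize $\mathcal{A}$ inside a functor category via a Yoneda-type embedding, and then identify this functor category with $\mathrm{Flat}(A)$. The purity part will then follow from the very definition of purity in terms of $\mathrm{fp}(\mathcal{A})$.

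First, I would fix a skeleton of $\mathrm{fp}(\mathcal{A})$ and define the functor ring $A=\bigoplus_{X,Y\in\mathrm{fp}(\mathcal{A})}\mathrm{Hom}_{\mathcal{A}}(X,Y)$, with multiplication by composition; the identities $e_X=\mathrm{id}_X$ form a complete set of orthogonal idempotents, so $A$ is a ring with enough idempotents, and a unitary right $A$-module is exactly an additive functor $\mathrm{fp}(\mathcal{A})^{\mathrm{op}}\to\mathrm{Ab}$. Under this identification $\mathrm{Mod}\text{-}A\simeq (\mathrm{fp}(\mathcal{A})^{\mathrm{op}},\mathrm{Ab})$, with the finitely generated projective module $e_XA$ corresponding to the representable $\mathrm{Hom}_{\mathcal{A}}(-,X)$. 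Now define
\[
F\colon\mathcal{A}\longrightarrow\mathrm{Mod}\text{-}A,\qquad F(M)=\mathrm{Hom}_{\mathcal{A}}(-,M)\big|_{\mathrm{fp}(\mathcal{A})}.
\]

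Next I would verify in turn that (a) $F$ lands in $\mathrm{Flat}(A)$, (b) $F$ is fully faithful, and (c) $F$ is essentially surjective onto $\mathrm{Flat}(A)$. For (a), write any $M\in\mathcal{A}$ as a directed colimit $M=\varinjlim X_i$ with $X_i\in\mathrm{fp}(\mathcal{A})$; since $\mathrm{Hom}_{\mathcal{A}}(X,-)$ commutes with directed colimits for $X$ finitely presented, $F(M)=\varinjlim F(X_i)=\varinjlim e_{X_i}A$ is a directed colimit of finitely generated projectives, hence flat by Lazard's theorem (for rings with enough idempotents). For (b), the computation
\[
\mathrm{Hom}_A(F(M),F(N))=\mathrm{Nat}(\mathrm{Hom}_{\mathcal{A}}(-,M),\mathrm{Hom}_{\mathcal{A}}(-,N))
\]
combined with $M=\varinjlim X_i$ and the Yoneda lemma applied termwise yields the natural isomorphism with $\mathrm{Hom}_{\mathcal{A}}(M,N)$. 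For (c), given a flat $P\in\mathrm{Mod}\text{-}A$, use Lazard again to write $P=\varinjlim e_{X_i}A$; the transition maps $e_{X_i}A\to e_{X_j}A$ correspond via Yoneda to morphisms $X_i\to X_j$ in $\mathrm{fp}(\mathcal{A})$, so one can form $M=\varinjlim X_i$ in $\mathcal{A}$ and check that $F(M)\cong P$.

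The main obstacle here is part (c): one must show that a presentation of a flat module as a colimit of representables can be lifted canonically (up to isomorphism) to $\mathcal{A}$, and independence from the chosen presentation uses the full faithfulness from (b), so a careful circular bootstrap is needed. One should also double-check the bookkeeping for ``rings with enough idempotents'' versus genuine unital rings, since several standard results (Lazard, flat = directed colimit of finitely generated projectives) must be re-examined in the non-unital setting; this is routine but easy to get wrong.

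Finally, for the purity statement, I would use the characterization that a short exact sequence $0\to M\to N\to T\to 0$ in $\mathcal{A}$ is pure iff $\mathrm{Hom}_{\mathcal{A}}(X,-)$ preserves exactness for every $X\in\mathrm{fp}(\mathcal{A})$. Under $F$ this is precisely the condition that $0\to F(M)\to F(N)\to F(T)\to 0$ is an exact sequence of right $A$-modules. Conversely, any short exact sequence of flat modules in $\mathrm{Flat}(A)$ is in particular short exact in $\mathrm{Mod}\text{-}A$, and applying the inverse equivalence to (b)+(c) produces a pure short exact sequence in $\mathcal{A}$. Since the equivalence preserves and reflects these ``$\mathrm{Hom}_{\mathcal{A}}(X,-)$-exact'' sequences, the bijection between the two classes of short exact sequences is immediate.
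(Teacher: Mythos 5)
The paper does not prove this statement at all: it is quoted as an external result, attributed to Crawley--Boevey \cite{CB} (with \cite[Chapter 16]{Prest2} given as an exposition), and used as a black box in the proof of Theorem~\ref{lam}. So there is no internal proof to compare against; the relevant comparison is with the cited source, and your sketch follows essentially the same classical route as Crawley--Boevey: identify unitary right modules over the functor ring with additive functors $\mathrm{fp}(\mathcal{A})^{\mathrm{op}}\to\mathrm{Ab}$, embed $\mathcal{A}$ by the restricted Yoneda functor, and characterize the image as the flat modules via the ``directed colimit of representables'' description. The outline is correct. Two small remarks: in step (c) you do not actually need the ``circular bootstrap'' you worry about --- essential surjectivity only requires producing \emph{one} $M$ with $F(M)\cong P$, and $F(\varinjlim X_i)\cong\varinjlim e_{X_i}A\cong P$ follows directly from $F$ preserving directed colimits, with no independence-of-presentation argument; and in the purity claim you should note that $\mathcal{A}$ is merely additive, so ``short exact sequence'' in $\mathcal{A}$ must be read via the $\Hom_{\mathcal{A}}(X,-)$-exactness definition (as the paper does in its introduction), whereupon the bijection with exact sequences of flat $A$-modules is, as you say, essentially definitional once the equivalence is in place. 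The non-unital Lazard bookkeeping you flag is genuine but standard.
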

In other words, $\mathcal{A}$ with its pure exact structure is equivalent to $\mathrm{Flat}(A)$ with its canonical exact structure inherited from $\operatorname{Mod-}\!\! A$. In particular, the equivalence takes injective objects in $\mathcal{A}$ (pure-injectives) to injective objects in $\mathrm{Flat}(A)$ (cotorsion flat modules). Similarly it preserves projectives, taking pure-projectives in $\mathcal{A}$ (retracts of direct sums of finitely presented objects) to projective modules in $\mathrm{Flat}(A)$. Also each exact category is of Grothendieck type with the class of acyclic complexes being deconstructible. This leads to injective model structures on the associated chain complex categories with their inherited degreewise exact structures. On the other hand, each of the exact categories $\mathcal{A}$ and $\mathrm{Flat}(A)$ possesses a set of projective generators leading to projective model structures. Concentrating on the exact category $\Ch({\rm Flat}(A))$, we have the following fact from~\cite[Corollary~7.4 and~7.5]{G13}.

\begin{lemma}\label{lemma-models}
There is an injective model structure on $\Ch({\rm Flat}(A))$ in which every object is cofibrant and the fibrant objects are dg-cotorsion complexes which are flat on each degree. The trivial objects are the acyclic complexes in $\Ch({\rm Flat}(A))$. This class coincides with $\widetilde{\mathcal{F}}$, the class of exact complexes with flat cycles. On the other hand, there is a projective model structure on $\Ch({\rm Flat}(A))$ having the same class of trivial objects. Here every object is fibrant and the cofibrant objects are the complexes consisting of a projective module in each degree. 
\end{lemma}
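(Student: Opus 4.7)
The plan is to fit both model structures into Hovey's bijective correspondence between complete cotorsion pairs and abelian (exact) model structures, mimicking the strategy used to prove Theorem~A. I would first equip $\mathrm{Flat}(A)$ with the exact structure inherited from $\operatorname{Mod-}\!\!A$ and verify that this is a Grothendieck-type exact category in the sense of Definition~\ref{def-Grot type}: it is closed under extensions, transfinite compositions of inflations, and direct limits in $\operatorname{Mod-}\!\!A$; the projective $A$-modules provide a generator; every object is small; and the classical filtration theorem for flat modules (every flat module is a continuous union of countably generated flat submodules with flat consecutive quotients) delivers deconstructibility in itself. The chain-complex category $\bfC(\mathrm{Flat}(A))$ inherits all of these properties, and in particular its class of acyclic complexes $\bfC_{ac}(\mathrm{Flat}(A))$ is deconstructible.

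Next, for the injective model structure, I would exploit the complete hereditary cotorsion pair $(\mathrm{Flat}(A), \mathcal{C})$ in $\mathrm{Flat}(A)$, where $\mathcal{C}$ is the class of cotorsion flat modules; this is the cotorsion pair furnished by Proposition~\ref{prop-inj}, and its right-hand class coincides with the class of injective objects of the exact category $\mathrm{Flat}(A)$. Applying Lemma~\ref{Je} lifts this pair to a complete hereditary cotorsion pair $(\widetilde{\mathcal{F}}, \widetilde{\mathcal{F}}^{\perp})$ in $\bfC(\mathrm{Flat}(A))$, and since kernels of admissible epimorphisms between flats are flat, $\widetilde{\mathcal{F}}$ is precisely the class of acyclic complexes with flat cycles. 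A direct imitation of Corollary~\ref{cor-DG-pure-injective cot pair} then identifies $\widetilde{\mathcal{F}}^{\perp}$ with the dg-cotorsion complexes that are flat in each degree. A second application of Proposition~\ref{prop-inj}, now to the category $\bfC(\mathrm{Flat}(A))$ with its degreewise exact structure, supplies the companion cotorsion pair $(\bfC(\mathrm{Flat}(A)), \widetilde{\mathcal{C}})$ whose right-hand class consists of contractible complexes of cotorsion flats, and one checks $\widetilde{\mathcal{C}} = \widetilde{\mathcal{F}}^{\perp} \cap \widetilde{\mathcal{F}}$. Because $\widetilde{\mathcal{F}}$ is thick in $\bfC(\mathrm{Flat}(A))$, Hovey's correspondence yields the injective model structure with the advertised fibrant and trivial classes.

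The projective model structure is obtained dually and more easily: the projective $A$-modules form a set of projective generators of $\mathrm{Flat}(A)$, giving a complete hereditary cotorsion pair $(\mathcal{P}, \mathrm{Flat}(A))$; lifting it degreewise together with its companion pair produces a Hovey triple on $\bfC(\mathrm{Flat}(A))$ with the same class of trivial objects $\widetilde{\mathcal{F}}$, with cofibrant objects the degreewise projective complexes, and with every object fibrant. The main technical obstacle I anticipate is verifying the deconstructibility conditions needed to invoke Lemma~\ref{Je}, namely deconstructibility of $\mathrm{Flat}(A)$ in itself and of $\bfC_{ac}(\mathrm{Flat}(A))$ in $\bfC(\mathrm{Flat}(A))$, which relies on the filtration theorem for flat modules together with its chain-complex refinement. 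Once these are in place, the identification of the fibrant objects and of the trivial class with $\widetilde{\mathcal{F}}$ is routine, exactly parallel to the argument in Corollary~\ref{cor-DG-pure-injective cot pair}.
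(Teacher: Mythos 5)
The paper does not actually prove this lemma; it imports it from \cite[Corollaries~7.4 and~7.5]{G13}. Your reconstruction of the \emph{injective} half is correct, and it is essentially the argument the authors themselves write out later for Theorem~C in the scheme case: $\mathrm{Flat}(A)$ is a deconstructible class in $\operatorname{Mod-}\!A$, hence an exact category of Grothendieck type by \cite[Theorem~3.16]{J}; the class $\Ch_{ac}(\mathrm{Flat}(A))=\widetilde{\mathcal{F}}$ is deconstructible in $\Ch(\mathrm{Flat}(A))$ by \cite[Lemma~7.9]{J}; Proposition~\ref{prop-inj} and Lemma~\ref{Je} then produce the two complete cotorsion pairs, and the identification of $\widetilde{\mathcal{F}}^{\perp}$ with the degreewise flat dg-cotorsion complexes goes exactly as in Corollary~\ref{cor-DG-pure-injective cot pair}. (A small inaccuracy: flat modules are filtered by $\leq\kappa$-presented flat submodules for $\kappa=|A|+\aleph_0$, not by countably generated ones in general; this does not affect deconstructibility.)

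The \emph{projective} half is where you have a real gap: it is not ``dual and easier,'' and the final identification is not ``routine.'' Lifting the pair $(\mathrm{Proj},\mathrm{Flat}(A))$ to complexes only yields, as the left-hand class, the complexes $P$ of projectives satisfying the additional relative DG-condition that every chain map $P\to E$ with $E\in\widetilde{\mathcal{F}}$ is null-homotopic --- precisely the analogue of the dg-cotorsion condition that genuinely survives on the injective side. The lemma asserts the strictly stronger statement that this DG-condition is automatic, i.e.\ that \emph{every} degreewise projective complex is cofibrant. That is Neeman's theorem (any chain map from a complex of projectives to an acyclic complex of flats with flat cycles is null-homotopic), a substantial external input and exactly what Gillespie invokes in \cite[Corollary~7.5]{G13}; no formal cotorsion-pair manipulation produces it. Separately, the completeness of the projective cotorsion pair at the complex level requires its own argument (a generating set for the left class so that the small object argument applies): the dual of Lemma~\ref{Je} is not available for free, since $\Ch(\mathrm{Flat}(A))$ does not satisfy the dual (``co-Grothendieck'') hypotheses. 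Your injective argument stands; the projective statement needs these two additional ingredients made explicit.
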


On the other hand we learned from \cite{G} that the (usual, i.e. categorical) pure derived category of a locally finitely presented category $\mathcal A$ can be obtained as the homotopy category of both an injective and projective model category structure on the exact category $\Ch(\mathcal A)_{dw-pur}$. This denotes the exact category of chain complexes with the degreewise pure exact structure. So in view of the previous comments we have the following alternative way of defining the pure derived category of $\mathcal A$.

\begin{theorem}\label{lam}
Let $\mathcal A$ be a locally finitely presented additive category and $\mathrm{Flat}(A)$ its equivalent full subcategory of flat modules in  $\operatorname{Mod-}A$.
The (categorical) pure derived category of $\mathcal A$, $\mathcal D_{\mathrm{pur}}(\mathcal A)$, is equivalent to the derived category of the exact category $\mathrm{Flat}(A)$, $\mathcal D(\mathrm{Flat}(A))$.
\end{theorem}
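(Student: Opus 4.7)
The plan is to lift Crawley-Boevey's equivalence $\mathcal{A} \simeq \mathrm{Flat}(A)$ to the level of chain complexes, and then to observe that an equivalence of exact categories that respects the constituents of a Hovey cotorsion pair descends to an equivalence of homotopy categories. The cited theorem identifies $\mathcal{A}$, equipped with its pure exact structure, with $\mathrm{Flat}(A)$, equipped with its canonical exact structure inherited from $\operatorname{Mod-}A$. Applying the equivalence degreewise, degreewise pure conflations in $\Ch(\mathcal{A})$ correspond precisely to degreewise conflations in $\Ch(\mathrm{Flat}(A))$, so we obtain an induced equivalence $\Ch(\mathcal{A})_{dw-pur} \simeq \Ch(\mathrm{Flat}(A))$ of exact categories of chain complexes.

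Next I would verify that this chain-level equivalence matches the ingredients of the two model structures. By \cite{G}, $\mathcal{D}_{\mathrm{pur}}(\mathcal{A})$ is the homotopy category of an injective model structure on $\Ch(\mathcal{A})_{dw-pur}$ whose trivial objects are the pure-acyclic complexes and whose fibrant objects are the ``dg-pure-injective'' complexes. By Lemma \ref{lemma-models}, $\mathcal{D}(\mathrm{Flat}(A))$ is the homotopy category of the analogous injective model structure on $\Ch(\mathrm{Flat}(A))$, with trivial objects $\widetilde{\mathcal{F}}$ and fibrant objects the dg-cotorsion complexes flat in each degree. Thus it suffices to show that the chain-level equivalence carries the Hovey pair defining the first model structure to the one defining the second.

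The key technical point, and the main obstacle, is to identify the classes of trivial objects under the equivalence. A complex $X \in \Ch(\mathcal{A})$ is pure-acyclic precisely when each sequence $0 \to Z_nX \to X_n \to Z_{n-1}X \to 0$ is pure exact in $\mathcal{A}$; Crawley-Boevey's correspondence sends this to a short exact sequence in $\mathrm{Flat}(A)$ with all three terms flat — that is, a complex in $\widetilde{\mathcal{F}}$ — and conversely. A parallel (and easier) argument identifies the pure-injective objects of $\mathcal{A}$ with the cotorsion flat $A$-modules, and hence ``dg-pure-injective'' complexes over $\mathcal{A}$ with dg-cotorsion complexes of flat modules. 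Combining these identifications, the Crawley-Boevey equivalence gives a Quillen equivalence between the two injective model structures, and passing to homotopy categories yields the desired equivalence $\mathcal{D}_{\mathrm{pur}}(\mathcal{A}) \simeq \mathcal{D}(\mathrm{Flat}(A))$. The delicate bookkeeping consists in checking that the correspondence respects flatness of cycles, which ultimately reduces to the fact that flat modules over the functor ring $A$ are exactly the directed colimits of finitely generated projectives — the same characterization underlying Crawley-Boevey's representation theorem itself.
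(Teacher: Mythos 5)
Your proposal is correct and follows essentially the same route as the paper: transport the degreewise pure exact structure through the Crawley--Boevey equivalence, match the pure-acyclic complexes with $\widetilde{\mathcal{F}}$ and the DG-pure-injectives with the dg-cotorsion complexes of flats, and conclude that the two injective model structures (hence their homotopy categories) correspond. The only ingredient the paper makes explicit that you elide is why matching the cotorsion pairs identifies the weak equivalences --- namely that both exact structures are weakly idempotent complete, so weak equivalences admit the intrinsic factorization (inflation with trivial cokernel followed by deflation with trivial kernel) and are therefore preserved --- but this is a routine citation rather than a gap in the argument.
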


\begin{proof}
Using the equivalence of Crawley-Boevey discussed above, the acyclic complexes in the exact category $\Ch(\mathcal A)_{dw-pur}$, which are the pure acyclic complexes, correspond to the class  $\widetilde{\mathcal{F}}$ of acyclic complexes in $\Ch({\rm Flat}(A))$. The injective model structure on $\Ch(\mathcal A)_{dw-pur}$ is completely determined by the injective cotorsion pair (Pure acyclic complexes, DG-pure-injectives). This corresponds to the injective cotorsion pair ($\widetilde{\mathcal{F}}$, dg-cotorsion complexes of flats) in Lemma~\ref{lemma-models}. There is a similar correspondence for the projective model structures. 
We note that by~\cite[Lemma~1]{CB} the exact structures are each weakly idempotent complete and so by~\cite[Lemma~3.1]{G13} a map is a weak equivalence in either model structure if and only if it factors as an admissible monomorphism (inflation) with trivial cokernel followed by an admissible epimorphism (deflation) with trivial kernel. From this we see that weak equivalences in  $\Ch(\mathcal A)_{dw-pur}$ correspond to weak equivalences in $\Ch({\rm Flat}(A))$. So the homotopy categories $\mathcal D_{\mathrm{pur}}(\mathcal A)$ and $\mathcal D(\mathrm{Flat}(A))$ must be equivalent.
\end{proof}

Note that the two injective cotorsion pairs in the above proof may each be thought of as the ``DG-injective'' cotorsion pairs, but with respect to their exact structure. Similarly the projective cotorsion pairs may be thought of as the ``DG-projective'' cotorsion pairs with respect to these exact structures.

So it seems clear that in order to gain a better understanding of the pure derived category, one should focus on the derived category of flat modules. In \cite{MS} Murfet and Salarian define the {\it pure derived category of flat sheaves} for a semi-separated noetherian scheme. But a close inspection of their definition reveals that they are considering the {\it derived category of flat sheaves} in the above sense. The next result shows that, for \emph{any} scheme $X$, we can realize the derived category of flat sheaves as the homotopy category of a model structure on $\Ch({\rm Flat}(X))$ which is injective with respect to the exact structure. 

\begin{theorem}
Let $X$ be a scheme, and ${\rm Flat}(X)$ the category of quasi-coherent flat sheaves. There is an injective exact model structure on $\Ch({\rm Flat}(X))$.  So every object is cofibrant and the fibrant objects are dg-cotorsion complexes which are flat on each degree. The trivial objects are those in $\Ch_{ac}({\rm Flat}(X)) = \widetilde{\mathcal{F}}$, the class of acyclic complexes with flat cycles. The corresponding homotopy category is the derived category of flat sheaves, $\mathcal D(\mathrm{Flat}(X))$.
\end{theorem}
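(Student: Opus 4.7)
The plan is to construct the model structure by producing two compatible complete cotorsion pairs on $\Ch(\mathrm{Flat}(X))$ equipped with its degreewise exact structure (inherited from the exact category $\mathrm{Flat}(X) \subseteq \Qco(X)$), and then invoke Hovey's correspondence, in strict parallel with the proof of Theorem A.

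First, I will verify that $\mathrm{Flat}(X)$, with the exact structure inherited from $\Qco(X)$, is an exact category of Grothendieck type. The exact axioms hold because flats are extension-closed in $\Qco(X)$. Efficiency in the sense of Definition~\ref{def-efficient} is routine except possibly for the generator and deconstructibility in itself; both rest on the Kaplansky-style theorem asserting that there is a regular cardinal $\kappa$ such that every flat quasi-coherent sheaf on $X$ is a transfinite extension of $\kappa$-presentable flat sheaves. A set of representatives of such $\kappa$-presentable flats simultaneously yields a generating set and a deconstructibility witness $\mathrm{Flat}(X) = \Filt(\mathrm{Flat}(X)^{\leq \kappa})$. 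Proposition~\ref{prop-inj} then furnishes a complete hereditary cotorsion pair $(\mathrm{Flat}(X), \mathcal{CF})$ in this exact category, whose right half $\mathcal{CF}$ is the class of cotorsion flat sheaves, i.e.\ those $C \in \mathrm{Flat}(X)$ with $\mathrm{Ext}^{1}_{\Qco(X)}(F,C) = 0$ for every $F \in \mathrm{Flat}(X)$.

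Next, observe that the acyclic complexes in the exact category $\mathrm{Flat}(X)$ coincide with $\widetilde{\mathcal{F}}$, the class of $\Qco(X)$-acyclic complexes of flats with flat cycles. The same Kaplansky-type argument, applied one dimension up to sphere and disk complexes of $\kappa$-presentable flats, shows that $\widetilde{\mathcal{F}}$ is deconstructible in $\Ch(\mathrm{Flat}(X))$. Lemma~\ref{Je} then lifts the above cotorsion pair to a complete hereditary cotorsion pair $(\widetilde{\mathcal{F}}, \dg\mathcal{CF})$ in $\Ch(\mathrm{Flat}(X))$ with its degreewise exact structure, where $\dg\mathcal{CF}$ is precisely the class of dg-cotorsion complexes of flats described in the statement. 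Applying Proposition~\ref{prop-inj} directly to the exact category $\Ch(\mathrm{Flat}(X))$ itself (again of Grothendieck type by the same reasoning) produces a second complete cotorsion pair $(\Ch(\mathrm{Flat}(X)), \widetilde{\mathcal{CF}})$ whose right half consists of the contractible complexes with cotorsion flat components.

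Finally, I check the Hovey compatibility $\widetilde{\mathcal{F}} \cap \dg\mathcal{CF} = \widetilde{\mathcal{CF}}$. The inclusion $\supseteq$ is immediate, and for $\subseteq$ any $Y$ in the intersection is an acyclic dg-cotorsion complex, so $\id_Y$ is null-homotopic, forcing $Y$ to be contractible with cotorsion flat components. Since $\widetilde{\mathcal{F}}$ is thick in $\Ch(\mathrm{Flat}(X))$ (by the two-out-of-three property for acyclicity together with flatness of cycles), Hovey's correspondence delivers the claimed injective exact model structure, and its homotopy category is by construction the derived category $\mathcal{D}(\mathrm{Flat}(X))$ of the exact category. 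I expect the main obstacle to be the deconstructibility of $\widetilde{\mathcal{F}}$ for an \emph{arbitrary} scheme: this is precisely where Murfet--Salarian impose semi-separatedness and noetherianness, and removing these hypotheses relies on recent Kaplansky-style results for flat quasi-coherent sheaves in full generality.
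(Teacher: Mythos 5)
Your proposal is correct and follows essentially the same route as the paper: establish that $\mathrm{Flat}(X)$ with its inherited exact structure is of Grothendieck type via deconstructibility of flat quasi-coherent sheaves, deduce deconstructibility of $\widetilde{\mathcal{F}}$ in $\Ch(\mathrm{Flat}(X))$, and obtain the injective model structure from the resulting cotorsion pairs (the paper packages the last step by citing \v{S}tov\'{\i}\v{c}ek's Theorem~7.11 rather than unwinding the two cotorsion pairs and the Hovey compatibility by hand, and identifies $\widetilde{\mathcal{F}}^{\perp}$ with the dg-cotorsion complexes of flats by the argument of Corollary~\ref{cor-DG-pure-injective cot pair}).
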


\begin{proof}
The category of quasi-coherent sheaves is Grothendieck and the class ${\rm Flat}(X)$ of flat quasi-coherent sheaves is deconstructible. So by~\cite[Theorem~3.16]{J} we get that ${\rm Flat}(X)$ inherits the structure of an exact category of Grothendieck type. Moreover, by~\cite[Lemma~7.9]{J} we get that $\Ch_{ac}({\rm Flat}(X)) = \widetilde{\mathcal{F}}$ is deconstructible in the exact category $\Ch({\rm Flat}(X))$. Then we conclude, using~\cite[Theorem~7.11]{J},  that $(\widetilde{\mathcal{F}}, \widetilde{\mathcal{F}}^{\perp})$ is an injective model structure in  $\Ch({\rm Flat}(X))$. 
It is left to argue that $\widetilde{\mathcal{F}}^{\perp} = \mathcal{Y} \cap \Ch_{ac}({\rm Flat}(X))$ where $\mathcal{Y}$ is the class of dg-cotorsion complexes in $\Ch(\Qco(X))$. We are calling a complex $Y \in \Ch(\Qco(X))$ \emph{dg-cotorsion} if each $Y_n$ is cotorsion and  every chain map $F \xrightarrow{} Y$ is null homotopic whenever $F$ is in $\widetilde{\mathcal{F}}$. So then $\mathcal{Y} \cap \Ch_{ac}({\rm Flat}(X))$ is the class of complexes $Y$ with each $Y_n$ cotorsion flat and with every chain map $F \xrightarrow{} Y$ being null homotopic whenever $F$ is in $\widetilde{\mathcal{F}}$. But now using that the injective objects in ${\rm Flat}(X)$ are the cotorsion flats, we can argue as in Corollary~\ref{cor-DG-pure-injective cot pair} that this coincides with  $\widetilde{\mathcal{F}}^{\perp}$ in $\Ch({\rm Flat}(X))$. 
\end{proof}

\end{document}